\numberwithin{equation}{section}
\newtheorem{theorem}{Theorem}[section]
\newtheorem{lemma}[theorem]{Lemma}
\newtheorem{corollary}[theorem]{Corollary}
\newtheorem{proposition}[theorem]{Proposition}
\newtheorem{remark}[theorem]{Remark}
\newcommand{\teichmuller}{Teichm{\"u}ller{ }}
 \let\c@theorem=\c@subsection
 \let\c@conjecture=\c@subsection
 \let\c@lemma=\c@subsection
 \let\c@proposition=\c@subsection
 \let\c@claim=\c@subsection
 \let\c@question=\c@subsection
 \let\c@criterion=\c@subsection
 \let\c@vfconj=\c@subsection
 \let\c@definition=\c@subsection
 \let\c@notation=\c@subsection
 \let\c@remark=\c@subsection
 \let\c@example=\c@subsection
 \let\c@equation=\c@subsection
 \let\c@figure=\c@subsection
 \let\c@wrapfigure=\c@subsection
\begin{document}
\title{Cusp excursions of random geodesics in Weil--Petersson type metrics}

\author[Gadre]{Vaibhav Gadre}
\address{\hskip-\parindent
     School of Mathematics \& Statistics,
     University of Glasgow,
     University Place,
      Glasgow G12 8QQ UK}
\email{Vaibhav.Gadre@glasgow.ac.uk}

\author[Matheus]{Carlos Matheus}
\address{\hskip-\parindent
  Centre de Math\'{e}matiques Laurent Schwartz, CNRS (UMR 7640), \'{E}cole Polytechnique, 91128 Palaiseau, France.}
\email{carlos.matheus@math.cnrs.fr }
 
\keywords{\teichmuller theory, Moduli of Riemann surfaces.}
\subjclass[2001]{ 37D40, 32G15, 53D25, 37A25, 37D50}

\begin{abstract}
We analyse cusp excursions of random geodesics for Weil--Petersson type incomplete metrics on orientable surfaces of finite type: in particular, we give bounds for maximal excursions. 

We also give similar bounds for cusp excursions of random Weil--Petersson geodesics on non-exceptional moduli spaces of Riemann surfaces conditional on the assumption that the Weil--Petersson flow is polynomially mixing.

Moreover, we explain how our methods can be adapted to understand almost greasing collisions of typical trajectories in certain slowly mixing billiards. 
\end{abstract}

\maketitle

\section{Introduction}

Let $S$ be an orientable surface of finite type with cusps. Suppose $S$ is endowed with a \texttt{"}Weil--Petersson\texttt{"} type metric: a negatively curved Riemannian metric which in  neighbourhood of each cusp is asymptotically modelled on a surface of revolution obtained by rotating the curve $y = x^r$ for some $r> 2$ about the $X$-axis in $\mathbb{R}^3$ (where $r$ may depend on the puncture). An example is the Weil--Petersson metric on the modular surface $\mathbb{H}^2/SL(2, \mathbb{Z})$ whose cusp neighbourhood is asymptotically modelled on the surface of revolution of $y = x^3$. We will state the precise hypothesis on the metric in due course. 

Pollicott--Weiss showed that the geodesic flow for a Weil--Petersson type metrics is ergodic \cite[Theorem 5.1]{Pol-Wei}. In more recent work, Burns--Masur--Matheus--Wilkinson \cite[Theorem 1]{BMMW} show that the geodesic flow for Weil--Petersson type metrics is exponentially mixing. In this article, we analyse cusp excursions of random geodesics proving bounds for \texttt{"}maximal\texttt{"} excursions, a specific example of a shrinking target result. Other well known examples of such results include the logarithm law of Sullivan for a hyperbolic metric on $S$ \cite[Theorem 2]{Sul}, the logarithm law of Kleinbock--Margulis on homogenous spaces \cite{KM} and the logarithm law of Masur for moduli spaces of Riemann surfaces \cite{Mas}.  

We also derive conditional bounds for maximal excursions of random Weil--Petersson geodesics on non-exceptional moduli spaces of Riemann surfaces. Burns--Masur--Wilkinson \cite{BMW} showed that the Weil--Petersson flow is ergodic; Burns--Masur--Matheus--Wilkinson showed that the Weil--Petersson flow is at most polynomially mixing \cite{BMMW2}. We derive our bounds for maximal excursions conditional on the assumption that the rate of mixing is polynomial in time. Moreover, we explain how our methods in the general case can be adapted to understand almost greasing collisions of typical trajectories in certain slowly mixing billiards. 

\subsection{Weil--Petersson type metrics} 
We recall some facts about the metric on a surface of revolution $R$ for $y = x^r$ for $r>2$. The surface $R$ carries a negatively curved, incomplete Riemannian metric. Let $\delta: R \to \mathbb{R}_{\geqslant 0}$ denote the Riemannian distance to the cusp. The Gaussian curvature 
at $p \in R$ has the following asymptotic expansion in $\delta$ as $\delta \to 0$:
\begin{equation}\label{e.curvature}
K(p) = - \frac{r(r-1)}{\delta(p)^2} + O\left( \frac{1}{\delta(p)} \right).
\end{equation}

We consider negatively curved Riemannian metrics on $S$ that have singularities of the above form. Let $X$ be a closed orientable surface and let $\{p_1, \dots, p_k\}$ be a collection of distinct points in $X$. A Weil--Petersson type metric $\rho$ is a $C^5$, negatively curved Riemannian metric carried by the punctured surface $S = X \setminus \{ p_1, \dots, p_k \}$ such that
\begin{enumerate}
\item the metric $\rho$ extends to a complete metric on $X$,
\item the metric $\rho$ lifts to a geodesically convex metric on the universal cover of $S$, and 
\item if $\delta_j: S \to \mathbb{R}_+$ is the distance to $p_j$ then there exists $r_1, \dots, r_k > 0$ such that the Gaussian curvature satisfies
\[
K(p) = -\sum\limits_{j=1}^k \frac{r_j(r_j-1)}{\delta_j(p)^2} + O\left( \frac{1}{\delta_j(p)} \right)
\]
and if $\nabla$ is the Levi-Civita connection defined by the metric then 
\[
\Vert \nabla^m(K(p)) \Vert = \sum\limits_{j=1}^{k} O\left( \frac{1}{\delta_j^{2+m}(p)}\right)
\]
for $m = 1, 2,3$ and all $p \in S$. 

\end{enumerate} 
Let $\mu$ be the Liouville measure on the unit tangent bundle $T^1S$ for the geodesic flow $\phi_t: T^1S \to T^1 S$ associated to a Weil--Petersson type metric. The main theorem we prove is:

\begin{theorem}
For all $\varepsilon>0$ and for $\mu$-almost every $v \in T^1S$, there is $T(v,\varepsilon) > 0$ such that the geodesic $\phi_t v$ satisfies
\[
\frac{1}{c T^{(1+\varepsilon)/r_j}} \leqslant \inf\limits_{t \leqslant T} \, \delta_j(\phi_t v) \leqslant \frac{c}{T^{(1-\varepsilon)/2r_j}}
\]
for all $j = 1, \dots, k$ and for all $T > T(v,\varepsilon)$.
\end{theorem}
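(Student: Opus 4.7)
Both inequalities reduce to the measure estimate
\[
\mu(\{v \in T^1 S : \delta_j(v) < s\}) \asymp s^{r_j + 1} \quad \text{as } s \to 0^+,
\]
which comes from the surface-of-revolution asymptotics: near the $j$th cusp the area element behaves like $\rho^{r_j}\, d\rho\, d\theta$ in normal coordinates, so $\{\delta_j < s\}$ has area $\asymp s^{r_j+1}$, and the Liouville measure contributes only a bounded circle-fibre factor. The lower bound is a one-sided Borel--Cantelli argument; the upper bound needs a quantitative shrinking-target theorem, for which I will use the exponential mixing of $\phi_t$ from \cite{BMMW}.

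\textbf{Lower bound.} Set $s_n := n^{-(1+\varepsilon)/r_j}$ and consider
$A_n := \{v : \inf_{t \in [n,n+1]} \delta_j(\phi_t v) < s_n\}$. Since $\delta_j$ is $1$-Lipschitz along unit-speed orbits, I can cover $[n,n+1]$ by $\lceil 1/s_n\rceil$ subintervals of length $s_n$ and conclude that $A_n$ is contained in a union of $\lceil 1/s_n\rceil$ flow translates of $\{\delta_j < 2 s_n\}$. Flow invariance of $\mu$ then gives $\mu(A_n) \lesssim s_n^{-1} \cdot s_n^{r_j + 1} = n^{-(1+\varepsilon)}$, which is summable in $n$. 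Borel--Cantelli, together with monotonicity of $T \mapsto \inf_{t \leq T} \delta_j(\phi_t v)$, then yields the lower half of the claimed inequality.

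\textbf{Upper bound.} Along the sequence $T_n := 2^n$ with $s_n := T_n^{-(1-\varepsilon)/2 r_j}$, I would construct a smooth cut-off $\chi_n$ with $\mathbf{1}_{\{\delta_j < s_n/2\}} \leq \chi_n \leq \mathbf{1}_{\{\delta_j < s_n\}}$, with $\int \chi_n\, d\mu \asymp s_n^{r_j+1}$ and $\|\chi_n\|_{C^k}$ at worst polynomial in $1/s_n$; such $\chi_n$ is obtained by smoothing only in the radial direction of the cusp model. Setting the occupancy time $N_n(v) := \int_0^{T_n} \chi_n(\phi_t v)\, dt$, I would control its second moment by splitting the two-point correlator $t \mapsto \int \chi_n\,(\chi_n \circ \phi_t)\, d\mu$ into a short regime (bounded trivially by $\|\chi_n\|_\infty \int \chi_n$) and a long regime (controlled by the exponential-mixing estimate of \cite{BMMW}), balancing at the logarithmic cut-off $t^* \asymp \log(1/s_n)$. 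This yields
\[
\mathrm{Var}(N_n) \lesssim T_n \cdot \log(1/s_n) \cdot \int \chi_n\, d\mu,
\]
after which Chebyshev gives $\mu(N_n = 0) \lesssim \log T_n / (T_n\, s_n^{r_j + 1})$, which is summable in $n$ exactly because the exponent in $s_n$ was chosen to be $(1-\varepsilon)/(2 r_j)$ (one uses that $r_j > 1$, which is forced by negative curvature in the cusp). Borel--Cantelli along $T_n$ gives $N_n(v) > 0$ for all large $n$, hence some $t \leq T_n$ with $\delta_j(\phi_t v) < s_n$, and monotonicity produces the upper bound for every large $T$.

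\textbf{Main obstacle.} The delicate part is the variance step: the mixing estimate of \cite{BMMW} controls $C^k$-smooth test functions with norms that blow up as the target shrinks, so the two regimes of the correlator must be balanced with some care, and one must verify that the smoothing cost is only logarithmic in $1/s_n$. The factor-of-two gap between the exponents $1/r_j$ and $1/(2 r_j)$ in the theorem is the intrinsic square-root loss of this second-moment approach; closing the gap would require a substantially finer shrinking-target input.
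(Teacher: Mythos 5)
Your proposal is correct in outline, but for half of the theorem it follows a genuinely different route from the paper. For the lower bound on $\inf_{t\leqslant T}\delta_j$ (no very deep excursions) you do essentially what the paper does in Theorem \ref{t.upper}: cover the time interval by blocks of length comparable to the target depth, use the Lipschitz (in the paper, convexity) property of $\delta_j$ along orbits to pull the event back to a flow translate of a fixed cusp neighbourhood, apply the volume asymptotic \eqref{e.volume} giving measure $\asymp s^{r_j+1}$, invoke flow invariance and Borel--Cantelli; your unit-time blocks versus the paper's $T_k=k^{\beta}$ is only bookkeeping. For the upper bound (deep excursions do occur), however, the paper does \emph{not} run a shrinking-target argument on $\{\delta_j<s\}$ directly: it targets the much larger sets $X_R$ of vectors crossing a fixed collar with cuspidal angle $b\leqslant 1/R$ (measure $\asymp 1/R$), proves positivity of the Birkhoff integral of smoothed indicators $f_R$ with $R=T^{\xi}$, $\xi<1/2$, via the effective ergodic theorem (Theorem \ref{t.effective}, built on the crude $L^2$ bound of Lemma \ref{l.1}), and then converts a visit to $X_R$ into an excursion of depth $\asymp R^{-1/r}$ by the quasi-Clairaut relation (Proposition \ref{p.depth}); the exponent $1/2r$ arises precisely because the $T^{\alpha}$, $\alpha>1/2$, error term caps $R$ at $T^{1/2}$. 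Your argument instead applies a second-moment bound directly to the occupancy time of $\{\delta_j<s_n\}$, with the correlation integral split at $t^{*}\asymp\log(1/s_n)$, using the trivial bound $\int\chi_n\,(\chi_n\circ\phi_t)\,d\mu\leqslant\int\chi_n\,d\mu$ in the short regime and exponential mixing (with Hölder norms of $\chi_n$ polynomial in $1/s_n$, which indeed suffices since the paper's mixing statement is for $C^\theta$ observables) in the long regime; this variance bound $\lesssim T_n\log(1/s_n)\int\chi_n\,d\mu$ is sharper than Lemma \ref{l.1} because it exploits the small $L^1$ norm of the observable, and your Chebyshev--Borel--Cantelli step then closes, since $(1-\varepsilon)(r_j+1)<2r_j$. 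What each approach buys: yours is shorter and bypasses the cusp geometry (no Clairaut input beyond the volume asymptotic), and in fact, as set up, it would allow any exponent below $1/(r_j+1)$, which is strictly better than the paper's $1/(2r_j)$ -- so your closing remark misattributes the factor-of-two gap to "the intrinsic square-root loss of the second-moment approach"; the loss in the paper comes from its cruder effective ergodic theorem, not from your variance estimate, and the residual gap to the expected sharp exponent $1/r_j$ reflects clustering of occupancy time rather than anything your method cannot in principle reach. The paper's route via $X_R$ and quasi-Clairaut, on the other hand, is the framework reused for the winding-number and hyperbolic-distance statistics of Section \ref{s.winding} and for the polynomially mixing setting of Section \ref{s.applications}. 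One minor point you should make explicit: in the lower-bound half, the finite initial segment $t\in[0,n_0(v)]$ contributes a positive ($v$-dependent) infimum, which is absorbed into $T(v,\varepsilon)$ for $T$ large.
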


\subsection{Exponential mixing}
The main tool we use is the exponential mixing of the flow \cite{BMMW}. For $\theta \in (0,1]$ and an $L^\infty$ function $f: T^1S \to \mathbb{R}$ let 
\[
\vert f \vert_\theta = \sup\limits_{v \neq v'} \frac{\vert f(v) - f(v') \vert}{\rho(v, v')^\theta}.
\]
Let $C^\theta(T^1S)$ be the set of $L^\infty$ functions $f: T^1 S \to \mathbb{R}$ such that 
$\Vert f \Vert_\theta$ defined as  $\Vert f \Vert_\theta = \vert f \vert_{L^\infty} + \vert f \vert_\theta$ is finite. The precise decay of correlations is:

\begin{theorem}[Burns--Masur--Matheus--Wilkinson]
For every value $\theta \in (0,1]$ there are constants $K, C > 0$ such that for every $f_1, f_2 \in C^\theta(T^1 S)$ we have
\[
\left\vert \int\limits_{T^1 S} f_1(v) f_2(\phi_t v) \, d\mu - \int\limits_{T^1 S} f_1(v) \, d\mu \int\limits_{T^1 S} f_2(v) \, d\mu \right\vert \leqslant K e^{-Ct} \Vert f_1 \Vert_\theta \Vert f_2 \Vert_\theta.
\]
\end{theorem}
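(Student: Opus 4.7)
The plan is to establish exponential mixing by the standard pipeline for hyperbolic flows with cusps: reduce to a Poincar{\'e} return map on a compact part, show the return time function has exponential tail, build a Young tower with exponential tails, and finally pass from discrete-time to continuous-time mixing via a Dolgopyat-type argument.

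First I would fix a compact cross-section $\Sigma \subset T^1 S$, transverse to the flow and contained in a compact region of $T^1 S$ bounded away from the cusps. Write $F : \Sigma \to \Sigma$ for the first return map and $r : \Sigma \to \mathbb{R}_+$ for the return time. In the compact part, negative curvature yields uniform hyperbolicity of $\phi_t$, giving $F$ a hyperbolic product structure with a uniformly expanding unstable direction and a uniformly contracting stable one. The specific WP--type cusp asymptotics are what make $r$ have an exponential tail: each excursion into the $j$-th cusp is modelled on a surface of revolution for $y = x^{r_j}$, and Clairaut's first integral for such a surface reduces any excursion to an essentially one-dimensional problem parametrised by angular momentum. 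A direct computation in these coordinates gives $\mu(\Sigma \cap \{ r > t \}) \leqslant A e^{-\alpha t}$ for suitable $A, \alpha > 0$, since deep excursions correspond to an exponentially thin set of initial angular momenta.

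Next I would build a Young tower $(\widehat{\Delta},\widehat{F})$ over a reference hyperbolic product set $\Lambda \subset \Sigma$ whose first return time combines the hyperbolic return on the compact part with the cusp return, both of exponential tail. Young's results then give exponential decay of correlations for H{\"o}lder observables for the discrete-time system $(F, \mu|_\Sigma)$, and furnish uniform Lasota--Yorke bounds for the transfer operator on a suitable anisotropic Banach space.

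The main obstacle is the passage from discrete to continuous time. For this I would study the twisted transfer operator $\mathcal{L}_s$ associated to the pair $(F,r)$ for complex parameter $s$ in a strip around the imaginary axis, and aim for a uniform spectral gap along vertical lines $\mathrm{Re}(s) = 0$. The crucial hypothesis is the Uniform Non-Integrability of the temporal distance cocycle for pairs of close periodic orbits, which in turn rests on the transversality of the strong stable and unstable foliations. The delicate point is that this transversality must not degenerate along cusp excursions: the WP--type curvature asymptotics are just strong enough to control the twisting of the foliations even as they approach the punctures, but verifying UNI in their presence is the real technical hurdle. Once UNI and Lasota--Yorke are established, a contour shift in the Laplace transform of the correlation function together with a Paley--Wiener argument delivers the bound $K e^{-Ct} \| f_1 \|_\theta \| f_2 \|_\theta$; the dependence on $\theta \in (0,1]$ enters only through the embedding of $C^\theta$ into the anisotropic norm, so the $C^\theta$ statement follows from the $C^1$ case by interpolation/approximation.
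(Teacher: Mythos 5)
This statement is not proved in the paper at all: it is quoted as an external input from Burns--Masur--Matheus--Wilkinson \cite{BMMW} (their Theorem 1), and everything in the present article takes it as a black box. So there is no internal proof to compare against; what you have written is an outline of what the cited paper must do, and while the overall pipeline (cross-section away from the cusps, exponential-tail tower, functional-analytic/Dolgopyat-type passage from discrete to continuous time) has roughly the right shape, the step you lean on most is wrong, and the steps that are actually hard are missing.

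The concrete gap is your mechanism for the exponential tail. By the convexity of $t \mapsto \delta(\phi_t v)$ (Lemma \ref{l.convex} and Corollary \ref{c.time}, quoted here from \cite{BMMW}), a geodesic entering $\mathbb{D}^\ast(B)$ leaves it within time $2B$: cusp excursions are \emph{short}, so long returns to a section are not caused by the cusp at all. Moreover, by the quasi-Clairaut relation (Proposition \ref{p1.qC}) the set of section points whose excursion reaches depth $1/D$ is the set $b \lesssim (D\delta_0)^{-r}$, of measure of order $D^{-r}$ --- polynomially, not exponentially, thin in the angular-momentum parameter. So ``deep excursions correspond to an exponentially thin set of initial angular momenta'' is false, and it cannot be the source of an exponential tail for $r$. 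The genuine difficulties in \cite{BMMW} lie elsewhere: during brief deep excursions the curvature blows up like $-r(r-1)/\delta^2$ (see \eqref{e.curvature}) and the flow derivatives are unbounded; the flow is incomplete, so the first-return map has singularities rather than the clean uniformly hyperbolic product structure you assert; and one must prove distortion/Lasota--Yorke bounds and H\"older regularity of the stable and unstable bundles for a metric that is only $C^5$ with the prescribed curvature asymptotics, balancing the polynomial blow-up of derivatives against the polynomially small measure of deep excursions. None of this bookkeeping appears in your sketch. Finally, for the continuous-time step, in the geodesic-flow setting the non-integrability input comes essentially from the contact structure rather than from a delicate cusp-by-cusp verification of UNI; declaring UNI ``the real technical hurdle'' and leaving it unargued, on top of the incorrect tail mechanism, means the proposal is a plan whose crucial content is precisely the cited paper, not a proof.
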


\subsection{Organisation of the paper} 

Using the exponential mixing of Weil--Petersson type flows, in Section \ref{s.effective} we first obtain effective convergence of ergodic averages. After that, we review in Section \ref{s.BMMW-review} some useful facts about the geometry of cusps of Weil--Petersson type metrics. The effective theory and the cusp geometry are then used in Section \ref{s.excursions} to estimate the time spent in a cusp neighbourhood of a particular depth, so that an excursion to this neighbourhood happens or is prohibited depending on whether the estimate is positive or not. Next, we study in Section \ref{s.winding} how random Weil--Petersson type geodesics do wind near cusps and the hyperbolic distances covered by (genuine) Weil--Petersson geodesics in exceptional moduli spaces. Finally, we briefly explain in Section \ref{s.applications} certain extensions of some of our results to the case of slowly mixing flows including a class of billiards. For non-exceptional moduli spaces, these extended results give bounds for cusp excursions conditional on the assumption that the Weil--Petersson flow has a polynomial mixing rate that is at least linear in time.

\subsection*{Acknowledgements} We are thankful to Jonathan Chaika for some discussions related to this work which occurred during a visit of the first author to him at the Institut Henri Poincar\'e (IHP). Also, the first author would like to thank the IHP for its hospitality. We thank the referees for their comments which helped to improve the paper.

\section{Ergodic averages of exponentially mixing flows}\label{s.effective}

Let $g_t$ be a flow on a space $Y$ preserving a probability measure $\mu$. Suppose that $g_t$ has an exponential decay of correlations i.e., there are constants $K, C>0$ such that 
\begin{equation}\label{e.correlation}
\left\vert \int\limits_Y f_1 f_2 \circ g_t \, d\mu - \int\limits_Y f_1 \, d\mu \int\limits_Y f_2 \, d\mu \right\vert \leqslant K e^{-Ct} \Vert f_1 \Vert_B \Vert f_2 \Vert_B 
\end{equation}
for all $t \geqslant 0$ and all real valued observables $f_1, f_2$ in a Banach space $B \subset L^1(\mu)$ consisting of all functions satisfying some regularity hypothesis.  

\begin{lemma}\label{l.1} 
Any observable $ f \in B$ with $\int_Y f \, d\mu = 0$ satisfies 
\[
\int\limits_Y \left(\int\limits_0^T f(g_t y) \, dt \right)^2\,d\mu(y) \leqslant \frac{2K}{C} T \|f\|_B^2.
\]
\end{lemma}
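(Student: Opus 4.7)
The plan is to expand the square, use Fubini, and then exploit the decay-of-correlations hypothesis on the integrand.

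First I would write
\[
\int_Y \left( \int_0^T f(g_t y) \, dt \right)^2 d\mu(y) = \int_0^T \int_0^T \int_Y f(g_s y) f(g_t y) \, d\mu(y) \, ds \, dt,
\]
justifying the Fubini interchange by boundedness of $f \in B \subset L^\infty$ (or, more conservatively, by the $L^1$ hypothesis together with boundedness from $B$). Next I would use the $g_s$-invariance of $\mu$ to rewrite the inner integral, via the substitution $y \mapsto g_{-s} y$, as
\[
\int_Y f(y) f(g_{t-s} y) \, d\mu(y).
\]

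Now I would apply the decay-of-correlations hypothesis \eqref{e.correlation} with $f_1 = f_2 = f$. Since $\int_Y f \, d\mu = 0$, the product-of-integrals term vanishes, so for $t \geq s$
\[
\left| \int_Y f(y) f(g_{t-s} y) \, d\mu(y) \right| \leq K e^{-C(t-s)} \|f\|_B^2,
\]
and symmetrically for $s \geq t$ with $|t-s|$ in the exponent. Splitting the square $[0,T]^2$ into the two triangles $\{s \leq t\}$ and $\{s \geq t\}$ and using symmetry, I would bound
\[
\int_0^T \int_0^T \left| \int_Y f(g_s y) f(g_t y) \, d\mu \right| ds \, dt \leq 2 K \|f\|_B^2 \int_0^T \int_0^t e^{-C(t-s)} \, ds \, dt.
\]

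Finally, the inner integral is $(1 - e^{-Ct})/C \leq 1/C$, so the whole expression is bounded by $(2K/C) T \|f\|_B^2$, as desired. The argument is entirely routine; the only mild subtlety is confirming that Fubini is legal, which follows from $f \in B$ being in $L^\infty$ (and hence $L^2$) so that the double integral is absolutely convergent. No step is really an obstacle — the only conceptual input is the orthogonality-like cancellation that turns the decay-of-correlations exponential into a factor of $1/C$ after integration.
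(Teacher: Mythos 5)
Your proof is correct and follows essentially the same route as the paper: expand the square, use Fubini and the $g_t$-invariance of $\mu$ to reduce to the correlation at time $|t-s|$, apply the decay-of-correlations bound (with the mean-zero hypothesis killing the product term), and integrate $e^{-C|t-s|}$ over the square to get the factor $2T/C$. No issues to flag.
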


\begin{proof} We write 
\begin{align*}
\int\limits_Y \left(\int\limits_0^T f(g_t y) \, dt \right)^2\,d\mu(y) &= \int\limits_Y \int\limits_0^T \int\limits_0^T f(g_t y) f(g_s y) \, dt \, ds \, d\mu(y) \\ 
&= \int\limits_0^T\int\limits_0^T \left(\int_{Y} f(g_t x) f(g_s x) \, d\mu(x)\right) \, dt \, ds .
\end{align*}
By $g_t$-invariance of $\mu$, we get 
\[
\int\limits_Y \left(\int\limits_0^T f(g_t x) \, dt \right)^2\,d\mu(x) = \int\limits_0^T\int\limits_0^T \left(\int\limits_Y f(g_{|t-s|} x) f(x) \, d\mu(x)\right) \, dt \, ds.
\]
The exponential decay of correlations \eqref{e.correlation} implies that 
\[
\int\limits_Y \left(\int\limits_0^T f(g_t x) \, dt \right)^2\,d\mu(x) \leqslant K\| f \|_{B}^2 \int\limits_0^T\int\limits_0^T e^{-C |t-s|} \, dt \, ds\leqslant \frac{2K}{C}T \| f \|_{B}^2.
\]
This proves the lemma. 

\end{proof}

\begin{remark} In Section \ref{s.applications}, we extend this lemma to polynomially mixing flows. 
\end{remark}

\subsection{Effective ergodic theorem for fast mixing flows}
Suppose that $g_t$ is an exponentially mixing flow on $(Y,\mu)$ satisfying \eqref{e.correlation}. 

Fix $1/2<\alpha<1$ and denote $T_k=T_k(\alpha)=k^{2\alpha/(2\alpha-1)}$. 

\begin{theorem}\label{t.effective} 
Given $m>1$, a function $n:\mathbb{R}\to\mathbb{N}$ such that $n(T)=n(T_k)$ for each $T_k \leqslant T < T_{k+1}$, and a sequence $\{f_j\}_{j\in\mathbb{N}}\subset B$ of non-negative functions, we have for $\mu$-almost every $y \in Y$ that 
\begin{align*}
\frac{1}{m} T \| f_{n(T)} \|_{L^1} - 2 T^{\alpha}\| f_{n(T)}\|_{B} &\leqslant \int\limits_0^T f_{n(T)}(g_t y) \, dt \\
&\leqslant mT\| f_{n(T)} \|_{L^1} + 2 T^{\alpha}\| f_{n(T)} \|_B
\end{align*} 
for all $T$ sufficiently large (depending on $y$). 
\end{theorem}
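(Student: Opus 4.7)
The plan is a discretise-then-interpolate Borel--Cantelli argument: first combine Lemma \ref{l.1} with Chebyshev's inequality to control the ergodic integrals at the sample times $T_k$, then use the nonnegativity of the $f_j$'s and the fact that $T_{k+1}/T_k\to 1$ to pass to arbitrary $T \in [T_k,T_{k+1})$.

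More concretely, I first apply Lemma \ref{l.1} to the mean-zero observable $f_j - \|f_j\|_{L^1}$ (whose $B$-norm is bounded by $\|f_j\|_B$ up to an absolute constant, assuming constants lie in $B$) to get
\[
\int_Y \left( \int_0^S f_j(g_t y)\,dt - S\,\|f_j\|_{L^1}\right)^2 d\mu(y) \leqslant C_0 \, S\, \|f_j\|_B^2.
\]
Chebyshev then bounds the measure of the set on which the deviation exceeds $S^\alpha \|f_j\|_B$ by $C_0\, S^{1-2\alpha}$. Applying this with $(S,j)=(T_k,n(T_k))$ and with $(S,j)=(T_{k+1},n(T_k))$, and using $T_k = k^{2\alpha/(2\alpha-1)}$, both exceptional measures are $O(k^{-2\alpha})$, hence summable since $\alpha>1/2$. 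Borel--Cantelli therefore yields, for $\mu$-a.e.\ $y$ and all $k\geqslant k_0(y)$,
\[
\left| \int_0^{T_k} f_{n(T_k)}(g_t y)\,dt - T_k \|f_{n(T_k)}\|_{L^1} \right| \leqslant T_k^\alpha \|f_{n(T_k)}\|_B,
\]
together with the same inequality with $T_k$ replaced by $T_{k+1}$ but the index $n(T_k)$ unchanged.

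For the interpolation step, fix $T\in[T_k,T_{k+1})$, so that $n(T)=n(T_k)$. Since $f_{n(T_k)}\geqslant 0$,
\[
\int_0^{T_k} f_{n(T_k)}(g_t y)\,dt \;\leqslant\; \int_0^T f_{n(T)}(g_t y)\,dt \;\leqslant\; \int_0^{T_{k+1}} f_{n(T_k)}(g_t y)\,dt.
\]
Because $T_{k+1}/T_k=(1+1/k)^{2\alpha/(2\alpha-1)}\to 1$, for $k$ large we have $T_{k+1}\leqslant m T_k \leqslant m T$ and $T_{k+1}^\alpha \leqslant 2T^\alpha$, while $T_k > T/m$ and $T_k^\alpha \leqslant T^\alpha$. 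Plugging the Borel--Cantelli estimates into the two outer integrals yields precisely the two-sided bound claimed in the theorem.

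The only delicate point is that the Borel--Cantelli argument must control the integral at $T_{k+1}$ with the index $n(T_k)$, not $n(T_{k+1})$ --- otherwise the upper half of the interpolation cannot be closed, since it is $\int_0^{T_{k+1}} f_{n(T_k)}$ that dominates $\int_0^T f_{n(T)}$. The exponent $2\alpha/(2\alpha-1)$ in the definition of $T_k$ is dictated by two competing demands: $\sum_k T_k^{1-2\alpha}<\infty$ (summability for Borel--Cantelli) and $T_{k+1}/T_k\to 1$ (so that the gap between consecutive sample times loses only a factor slightly larger than $1$, which is absorbed by the hypothesis $m>1$).
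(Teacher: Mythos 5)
Your proposal is correct and follows essentially the same route as the paper's proof: apply Lemma \ref{l.1} to the centred observable, use Chebyshev at the sample times $T_k$ and $T_{k+1}$ with the index $n(T_k)$ frozen, sum via Borel--Cantelli using $\alpha>1/2$, and then interpolate to general $T\in[T_k,T_{k+1})$ using nonnegativity and $T_{k+1}/T_k\to 1$, absorbing the loss into $m>1$ and the factor $2$. The only cosmetic difference is that you centre by $\|f_j\|_{L^1}$ rather than $\int_Y f_j\,d\mu$ (the same quantity here, since the $f_j$ are nonnegative), with the same implicit norm comparison the paper uses when asserting $\|F\|_B\leqslant 2\|f\|_B$.
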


\begin{proof} Given $f\in B$, let $F=f - \int_Y f \, d\mu\in B$. Since $\| F\|_{B} \leqslant 2 \| f \|_{B}$, we get from Lemma \ref{l.1} that  
\[
\int\limits_Y \left(\int\limits_0^T F(g_t y) \, dt \right)^2\,d\mu(y) \leqslant \frac{2K}{C} T \| F \|_{B}^2 \leqslant \frac{8K}{C} T \| f \|_{B}^2.
\]
Therefore, 
\[
\mu \left( \left\{ y \in Y: \left(\int\limits_0^T F(g_t y) \, dt \right)^2 \geqslant R\right\} \right) \leqslant \frac{1}{R} \left( \frac{8K}{C} T \| f \|_{B}^2 \right).
\]
By setting $R=T^{2\alpha} \| f \|_{B}^2$, we obtain 
\begin{equation}\label{e.average}
\mu\left(\left\{y \in Y: \left(\int_0^T F(g_t y) \, dt \right)^2 \geqslant T^{2\alpha}\| f \|_{B}^2\right\}\right)\leqslant \frac{8K}{C} T^{1-2\alpha}.
\end{equation}
Consider the sequence $\{f_j\}_{j\in\mathbb{N}}\subset B$ and let $F_j:= f_j-\int_Y f_j\,d\mu$. From the estimate \eqref{e.average} with $T=T_k$ and $F=F_{n(T_k)}$, and $T=T_{k+1}$ and $F=F_{n(T_k)}$, we get 
\begin{align*} 
\mu \left( \left \{ y \in Y: \left(\int\limits_0^{T_k} F_{n(T_k)}(g_t y) \, dt \right)^2 \geqslant T_k^{2\alpha}\| f_{n(T_k)} \|_{B}^2\right\} \right) &\leqslant \frac{8K}{C} T_k^{1-2\alpha} \\
&= \frac{8K}{C} \frac{1}{k^{2\alpha}}
\end{align*}
and 
\[
\mu \left( \left\{ y \in Y: \left(\int\limits_0^{T_{k+1}} F_{n(T_k)}(g_t y) \, dt \right)^2 \geqslant T_{k+1}^{2\alpha}\| f_{n(T_k)} \|_{B}^2 \right\} \right) 
 \leqslant \frac{8K}{C} \frac{1}{(k+1)^{2\alpha}}.
 \]
By the Borel--Cantelli lemma,
the summability of the series $\sum\limits_{i=1}^{\infty}\frac{1}{i^{2\alpha}}<\infty$ for $\alpha>1/2$ and the previous inequalities imply that for $\mu$-almost every $y \in Y$
\[
\left( \int\limits_0^{T_k} F_{n(T_k)}(g_t y) \, dt \right)^2 \leqslant T_k^{2\alpha}\| f_{n(T_k)} \|_{B}^2
\]
and 
\[
\left(\int\limits_0^{T_{k+1}} F_{n(T_k)}(g_t y) \, dt \right)^2 \leqslant T_{k+1}^{2\alpha}\|f_{n(T_k)}\|_{B}^2
\]
for all $k$ sufficiently large (depending on $y$). 
On the other hand, the non-negativity of the functions $f_j$ says that 
\[
\int\limits_0^{T_k} f_{n(T_k)}(g_t y) \, dt \leqslant \int\limits_0^{T} f_{n(T_k)}(g_t y) \, dt \leqslant \int\limits_0^{T_{k+1}} f_{n(T_k)}(g_t y) \, dt 
\]
for all $T_k \leqslant T < T_{k+1}$. 
Hence, 
\begin{align*}
\int\limits_0^{T_k} F_{n(T_k)}(g_t y)\, dt &= \int\limits_0^{T_k} f_{n(T_k)}(g_t y)\, dt - T_k\int\limits_Y f_{n(T_k)} \, d\mu \\ 
&\leqslant \int\limits_0^{T} f_{n(T_k)}(g_t y)\, dt - T_k\int\limits_Y f_{n(T_k)} \, d\mu \\ 
&\leqslant \int\limits_0^{T_{k+1}} f_{n(T_k)}(g_t y)\, dt - T_k\int\limits_Y f_{n(T_k)} \, d\mu \\ 
&= \int\limits_0^{T_{k+1}} f_{n(T_k)}(g_t y)\, dt 
\begin{aligned}[t] 
&- T_{k+1}\int\limits_Y f_{n(T_k)} \, d\mu \\
&+ (T_{k+1}-T_k)\int\limits_Y f_{n(T_k)} \, d\mu 
\end{aligned}
\\ 
&= \int\limits_0^{T_{k+1}} F_{n(T_k)}(g_t y)\, dt + (T_{k+1}-T_k)\int\limits_Y f_{n(T_k)} \, d\mu
\end{align*}
for all $T_k \leqslant T < T_{k+1}$. We may now combine the above lower and upper bounds on $\int_0^T f_{n(T_k)} (g_t y) \, dt$ with earlier bounds to conclude that for $\mu$-almost every $y \in Y$ and all $k$ sufficiently large (depending on $y$) 
\begin{align*}
T_k \| f_{n(T_k)} \|_{L^1} - T_k^{\alpha} \| f_{n(T_k)}\|_B &\leqslant \int_0^T f_{n(T_k)}(g_t y) \, dt \\
&\leqslant T_{k+1} \| f_{n(T_k)} \|_{L^1} + T_{k+1}^{\alpha} \| f_{n(T_k)}\|_B
\end{align*} 
whenever $T_k \leqslant T < T_{k+1}$. Because $\frac{T_{k+1}}{T_k} = \left(\frac{k+1}{k}\right)^{2\alpha/(2\alpha-1)}\to 1$ as $k \to \infty$ and $n(T)=n(T_k)$ for $T_k \leqslant T<T_{k+1}$, given $m>1$, the previous estimate says that for $\mu$-almost every $y \in Y$
\begin{align*}
\frac{1}{m}T \|f_{n(T)} \|_{L^1} - 2T^{\alpha} \|f_{n(T)} \|_B &\leqslant \int\limits_0^T f_{n(T)}(g_t y) \, dt \\
&\leqslant mT \|f_{n(T)}\|_{L^1} + 2T^{\alpha}\|f_{n(T)}\|_B
\end{align*} 
for all $T$ sufficiently large (depending on $y$ and $m>1$). This proves the theorem. 

\end{proof}

\begin{remark} In Section \ref{s.applications}, we generalise this theorem to polynomially mixing flows. 
\end{remark}

\section{Cusp geometry}\label{s.BMMW-review}

For the sake of expositional simplicity, we will focus on a single cusp neighbourhood and drop the index in the notation. Here, we will recall from \cite{BMMW} properties of Weil-Petersson type metrics that we need to use. The main focus will be commonalities with surfaces of revolution.

\subsection{Local co-ordinates near cusps} Let $W$ be the surface of revolution for $y = x^r$ with $r>2$. In $\mathbb{R}^3$, the surface $W$ inherits the co-ordinates
\[
(x, \tau) \to (x, x^r \cos \tau, x^r \sin \tau) \hskip 5pt \text{ where } \hskip 5pt x \in (0,1], \tau \in [0,2\pi) 
\]
If $\delta$ denotes the distance on $W$ to the cusp $(0,0,0)$ then $\delta(x, \tau) = x+ o(x)$ and the curvature estimate \eqref{e.curvature} holds. Here are some other properties that hold:
\begin{enumerate}
\item The area of the region $\delta \leqslant B$ as $B \to 0$ is 
\[
\frac{2 \pi}{r+1} B^{r+1} + O(B^{r+2}). 
\]
If we let $\ell(\delta_0)$ be the length of the curve given by setting $\delta = \delta_0$ then the constant $2 \pi$ here can be interpreted as the ratio $\ell(\delta_0)/\delta_0^r$ when $\delta_0 = 1$. 
\item \emph{Clairaut integral:} Let $\phi_t(v)$ be a geodesic segment in $T^1 W$ and $\psi_t$ be the angle between the vector $\phi_t(v)$ and the tangent to the foliation given by $\tau = \text{const}$. Then $\phi_t(v) \to  x(\phi_t(v))^r \sin \psi_t$ is a constant function. 
\end{enumerate} 

A Weil-Petersson type metric on $S$ satisfies analogues of the above properties in neighbourhoods of the cusps. As stated before, we will focus on a single cusp neighbourhood and drop the subscript from the notation for the cusp, we will regard the neighbourhood as defined by $\delta \leqslant \delta_0$, where $\delta$ is the distance to the cusp. For $0 < B \leqslant \delta_0$, let $\mathbb{D}^\ast(B)$ be given by $\delta \leqslant B$. Then, the volume of the region $\mathbb{D}^\ast(B)$ satisfies \cite[Section 3.1]{BMMW}
\begin{equation}\label{e.volume}
\text{vol}(\mathbb{D}^\ast (B)) = \frac{\ell(\delta_0) B^{r+1}}{(r+1) \delta_0^r} + O(B^{r+2}) 
\end{equation}
which is an exact analogue for the area term for a surface of revolution. 

To get an analogue of the Clairaut integral, we need a cuspidal angular function that is the analogue of $\sin \theta$ above. Let $J: T^1 S \to T^1 S$ be the almost complex structure compatible with the metric. For a point $p$ in the cusp neighbourhood, let $V \in T^1_p S$ be $\nabla \delta$. By \cite[Corollary 2.2]{BMMW}, $\Vert V \Vert = 1$. For a vector $v \in T^1_p S$, let $a(v) = \langle v, V \rangle$ and $b(v) = \langle v, J V \rangle$. The functions $a$ and $b$ are the analogues of the functions $\cos \psi$ and $\sin \psi$ that appears in the Clairaut integral. We first reproduce from \cite{BMMW} properties of the various quantities that we will need in our analysis.

\begin{lemma}\label{l.convex}\cite[Lemma 3.3]{BMMW}
For every $v \in T^1(\mathbb{D}^\ast(\delta_0))$ such that $a(v) \notin \{1, -1\}$, the function $t \to \delta(\phi_t(v))$ is strictly convex.
\end{lemma}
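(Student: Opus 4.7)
The plan is to compute $\tfrac{d^2}{dt^2}\delta(\phi_t v)$ explicitly and show it is strictly positive under the hypothesis. Setting $\gamma(t)=\phi_t v$ and decomposing $\dot\gamma = a V + b\, JV$ in the orthonormal frame $(V,JV)$ on $\mathbb{D}^\ast(\delta_0)$ (so $a^2+b^2\equiv 1$, and the hypothesis reads $b(v)\neq 0$), the first derivative is $\langle\dot\gamma,\nabla\delta\rangle = a(\gamma(t))$; differentiating once more and using $\nabla_{\dot\gamma}\dot\gamma = 0$ yields
\[
\tfrac{d^2}{dt^2}\delta(\gamma(t)) \;=\; \mathrm{Hess}_\delta(\dot\gamma,\dot\gamma).
\]

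Next I would reduce this Hessian to a single component. Because $\|V\|\equiv 1$ on $\mathbb{D}^\ast(\delta_0)$ (Corollary 2.2 of \cite{BMMW}), differentiating $\langle V, V\rangle\equiv 1$ shows $\mathrm{Hess}_\delta(X, V) = \langle \nabla_X V, V\rangle = 0$ for every vector field $X$. Equivalently, $\|\nabla\delta\|\equiv 1$ is the Eikonal equation, whose characteristic curves (the integral curves of $V$) are unit-speed geodesics, so $\nabla_V V = 0$. Both the $(V,V)$ and $(V,JV)$ components of $\mathrm{Hess}_\delta$ therefore vanish, leaving
\[
\tfrac{d^2}{dt^2}\delta(\gamma(t)) \;=\; b(\gamma(t))^2 \,\mathrm{Hess}_\delta(JV,JV)\big|_{\gamma(t)}.
\]

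The main obstacle is establishing $\mathrm{Hess}_\delta(JV,JV) > 0$ throughout $\mathbb{D}^\ast(\delta_0)$: geometrically this is the geodesic curvature of the level curve $\{\delta=\text{const}\}$ with respect to the outward normal $V$, i.e.\ the convexity of the horocycle-like foliation near the cusp. On the exact surface of revolution $y=x^r$ a direct computation gives $\mathrm{Hess}_\delta(JV,JV) = r/\delta + O(1)$ as $\delta\to 0$, which is positive once $\delta_0$ is small. In the general Weil--Petersson setting I would track the scalar $u(t):= \mathrm{Hess}_\delta(JV,JV)$ along a radial unit-speed geodesic via the two-dimensional Riccati equation $\dot u = -u^2 - K$; substituting the curvature asymptotic \eqref{e.curvature} with $r>2$ produces a positive solution on $(0,\delta_0]$ after possibly shrinking $\delta_0$. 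This is precisely the sort of cusp geometry computation carried out in \cite{BMMW} that this lemma is quoting.

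Finally, the hypothesis $a(v)\notin\{\pm 1\}$ propagates along the whole excursion: if $b(\gamma(t_0))=0$ for some $t_0$, then $\dot\gamma(t_0) = \pm V(\gamma(t_0))$ and by uniqueness of geodesics $\gamma$ would coincide with the radial geodesic through $\gamma(t_0)$, contradicting $b(v)\neq 0$. Hence $b(\gamma(t))^2 > 0$ as long as $\gamma$ remains in $\mathbb{D}^\ast(\delta_0)$, and combined with the positivity of $\mathrm{Hess}_\delta(JV,JV)$ this delivers $\tfrac{d^2}{dt^2}\delta(\gamma(t)) > 0$, i.e.\ strict convexity of $t\mapsto\delta(\phi_t v)$.
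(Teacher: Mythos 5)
The paper does not prove this lemma itself---it is quoted verbatim from \cite[Lemma 3.3]{BMMW}---and your argument is essentially the proof from that source: writing $\frac{d^2}{dt^2}\delta(\phi_t v)=\mathrm{Hess}\,\delta(\dot\gamma,\dot\gamma)=b^2\,\mathrm{Hess}\,\delta(JV,JV)$ and showing $\mathrm{Hess}\,\delta(JV,JV)=r/\delta+O(1)>0$ for $\delta_0$ small is exactly what the identities this paper quotes from \cite{BMMW} encode (e.g.\ Lemma 3.1(3), $db/dt=-rab/\delta+O(ab)$, combined with $a^2+b^2=1$ gives $\delta''=a'=b^2\bigl(r/\delta+O(1)\bigr)$). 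Your reduction to the $(JV,JV)$ component via $\|\nabla\delta\|\equiv 1$ and the uniqueness-of-geodesics argument that $b$ never vanishes along the excursion are both correct; the one step you defer---selecting the positive branch $r/\delta$ of the Riccati equation rather than $-(r-1)/\delta$ for the general Weil--Petersson type metric---is precisely the quantitative cusp estimate of \cite{BMMW} that the citation is meant to supply, so the proposal is sound.
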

In particular, the convexity implies
\begin{corollary}\label{c.time}\cite[Corollary 3.4]{BMMW}
Every unit speed geodesic that enters the neighbourhood $\mathbb{D}^\ast(B)$ leaves it in time $\leqslant 2B$. 
\end{corollary}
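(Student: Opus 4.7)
By Lemma \ref{l.convex}, on the interval $[t_0, t_1]$ during which the geodesic remains in $\mathbb{D}^\ast(B)$, the function $t \mapsto \delta(\phi_t v)$ is strictly convex; combined with the boundary values $\delta(\phi_{t_0} v) = \delta(\phi_{t_1} v) = B$, this yields a unique time $t_m \in (t_0, t_1)$ at which $\dot\delta(t_m) = a(\phi_{t_m} v) = 0$, with minimum depth $\delta_m := \delta(\phi_{t_m} v) \in [0, B]$. The plan is then to bound each of the descending interval $[t_0, t_m]$ and the ascending interval $[t_m, t_1]$ by $B$ separately.

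The crucial step is to upgrade strict convexity to a quantitative differential inequality
\[
\ddot\delta \;\geq\; \frac{r\,(1 - \dot\delta^2)}{\delta}\,\bigl(1 + o(1)\bigr) \qquad \text{as } \delta \to 0.
\]
In the exact surface-of-revolution model this is a direct computation: writing $\dot\gamma = aV + bJV$ with $V = \nabla\delta$ and $a^2 + b^2 = 1$, the Christoffel symbols of $dx^2 + x^{2r}d\tau^2$ give $\mathrm{Hess}(\delta)(\dot\gamma, \dot\gamma) = b^2\,r/\delta = (1 - \dot\delta^2)\,r/\delta$. For a general Weil--Petersson type metric the same leading-order expression follows from the curvature asymptotic \eqref{e.curvature} together with the $C^5$-control on $K$, with the error absorbed by choosing $\delta_0$ small. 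In fact this is essentially the estimate underlying Lemma \ref{l.convex}, and in practice I would extract its quantitative form from the proof of that lemma rather than redo the cusp-geometry computation.

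With the inequality in hand, I would treat it as a separable ODE by setting $y := 1 - \dot\delta^2$. On the ascending interval $[t_m, t_1]$ this gives $\dot y / y \leq -2r\,\dot\delta/\delta$, and integrating from $t_m$, where $y = 1$, yields $\dot\delta \geq \sqrt{1 - (\delta_m/\delta)^{2r}}\,(1 - o(1))$. Since $\delta_m/\delta \leq 1$ and $r > 2$ satisfies in particular $r \geq 1$, we have $(\delta_m/\delta)^{2r} \leq (\delta_m/\delta)^2$, and therefore
\[
t_1 - t_m \;=\; \int_{\delta_m}^{B} \frac{d\delta}{\dot\delta} \;\leq\; \bigl(1 + o(1)\bigr) \int_{\delta_m}^{B} \frac{\delta \, d\delta}{\sqrt{\delta^2 - \delta_m^2}} \;=\; \bigl(1 + o(1)\bigr) \sqrt{B^2 - \delta_m^2} \;\leq\; B
\]
for $B \leq \delta_0$ sufficiently small. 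The same argument on the descending segment gives $t_m - t_0 \leq B$, so that $t_1 - t_0 \leq 2B$.

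The main obstacle is the first step: turning the qualitative strict convexity of Lemma \ref{l.convex} into a Hessian bound with precisely the coefficient $r/\delta$, since it is this coefficient that drives the exponent $2r$ in the ODE integration and makes the key comparison $(\delta_m/\delta)^{2r} \leq (\delta_m/\delta)^2$ available. Once this is done, everything afterwards is elementary ODE manipulation and a single explicit integration.
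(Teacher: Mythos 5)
The paper itself does not prove this statement: it is imported verbatim from \cite[Corollary 3.4]{BMMW}, prefaced only by the remark that it follows from the convexity in Lemma \ref{l.convex}, so there is no in-paper argument to match yours against. Your reconstruction is essentially correct, and you rightly observe that the qualitative strict convexity actually stated in Lemma \ref{l.convex} cannot suffice on its own (a strictly convex $1$-Lipschitz function equal to $B$ at the endpoints of an arbitrarily long interval is easy to produce); the real input is the quantitative estimate $\ddot\delta = \frac{r b^2}{\delta} + O(b^2)$ from \cite[Lemma 3.1]{BMMW}, which is exactly what you propose to extract, and your model computation $\mathrm{Hess}(\delta)(\dot\gamma,\dot\gamma)=r b^2/\delta$ is the correct leading term. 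Note also that your integrated inequality $1-\dot\delta^2 \leqslant (\delta_{\min}/\delta)^{2r}$ on the ascending leg is just a one-sided form of the quasi-Clairaut relation, so you could alternatively have quoted Proposition \ref{p1.qC}; your ODE route is in fact cleaner there, because the error sits in the exponent rather than as a multiplicative constant $M$ that degenerates near $\delta=\delta_{\min}$. One bookkeeping step should be tightened: as written, your final bound $(1+o(1))\sqrt{B^2-\delta_{\min}^2}\leqslant B$ is not valid when $\delta_{\min}\ll B$, so you only obtain $2B(1+o(1))$ rather than $2B$. The fix is to absorb the multiplicative error into the exponent before comparing: since $r>2$, for $\delta_0$ small the inequality $\ddot\delta\geqslant \frac{r(1-\dot\delta^2)}{\delta}\bigl(1-O(\delta_0)\bigr)$ integrates to $1-\dot\delta^2\leqslant(\delta_{\min}/\delta)^{\rho}$ with $\rho=2r\bigl(1-O(\delta_0)\bigr)\geqslant 2$, and then $(\delta_{\min}/\delta)^{\rho}\leqslant(\delta_{\min}/\delta)^{2}$ yields $t_1-t_m\leqslant\sqrt{B^2-\delta_{\min}^2}\leqslant B$ exactly, with no error factor. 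Finally, you should say a word about the vectors with $a(v)=\pm 1$ excluded from Lemma \ref{l.convex}: such geodesics run straight into the cusp in time at most $B$ and the flow terminates there, so they do not affect the statement.
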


\subsubsection*{Quasi-Clairaut Relations:} We now state the quasi-Clairaut relations satisfied by the metric.

\begin{proposition}\label{p1.qC}\cite[Proposition 3.2]{BMMW}
If $\delta_0$ is small enough then there exists a constant $M = 1 + O(\delta_0)$ such that for every geodesic segment $\phi(v): [0,T] \to T^1 \mathbb{D}^\ast(\delta_0)$ and all $s, t \in [0,T]$
\[
\frac{1}{M} \delta(\phi_t(v))^r b(\phi_t(v)) \leqslant \delta(\phi_s(v))^r b(\phi_s(v)) \leqslant M \delta(\phi_t(v))^r b(\phi_t(v)).
\]
\end{proposition}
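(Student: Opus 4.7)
My plan is to establish the quasi-Clairaut relation by viewing $f(t) := \delta(\phi_t v)^r b(\dot\phi_t v)$ as an approximate first integral of the geodesic flow in $T^1 \mathbb{D}^*(\delta_0)$. In the model surface of revolution $y = x^r$, the quantity $f$ is exactly conserved (the classical Clairaut integral); for a WP-type metric, the deviation from conservation is controlled by the curvature asymptotics built into the definition. Combined with the bound on the duration of a geodesic segment in $\mathbb{D}^*(\delta_0)$ from Corollary~\ref{c.time}, this yields the multiplicative constant $M = 1 + O(\delta_0)$.

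\textbf{Main computation.} Decompose $\dot\phi_t v = aV + bJV$. Since $V = \nabla\delta$ and $\|V\|\equiv 1$, one has $\frac{d}{dt}\delta = a$, so $\frac{d}{dt}\delta^r = r\delta^{r-1}a$. The geodesic equation $\nabla_{\dot\gamma}\dot\gamma = 0$, combined with the identity $\langle JV, \nabla_X JV\rangle = 0$ for any vector field $X$ (a consequence of $\|JV\|\equiv 1$), gives
\[
\frac{db}{dt} = a^2 \langle V, \nabla_V JV\rangle + ab\,\langle V, \nabla_{JV}JV\rangle.
\]
In the exact surface of revolution $\langle V, \nabla_V JV\rangle\equiv 0$ (the orthonormal frame is parallel along the meridian geodesics) and $\langle V, \nabla_{JV}JV\rangle\equiv -r/\delta$ (the geodesic curvature of the $\delta$-level circles, consistent with the length asymptotic $\ell(\delta)\propto \delta^r$ underlying \eqref{e.volume}). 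Plugging into $\frac{df}{dt} = r\delta^{r-1}ab + \delta^r\frac{db}{dt}$ produces the exact cancellation $\frac{df}{dt}\equiv 0$.

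\textbf{Error and integration.} For a WP-type metric, the curvature asymptotic $K = -r(r-1)/\delta^2 + O(1/\delta)$ together with the bounds on $\|\nabla^m K\|$ for $m = 1, 2, 3$ translate, via the structure equation $d\omega^{12} = -K\,\omega^1 \wedge \omega^2$ and the Koszul formula, into the absolute bounds $\langle V, \nabla_V JV\rangle = O(1)$ and $\langle V, \nabla_{JV}JV\rangle + r/\delta = O(1)$. Substituting back, the singular $1/\delta$ contributions cancel precisely against the term $r\delta^{r-1}ab$ and one obtains
\[
\frac{df}{dt} = \delta^r a\cdot O(1).
\]
Using $a = d\delta/dt$ to convert the time integral to a $\delta$-integral, Lemma~\ref{l.convex} to split the segment at the unique minimum $\delta_*$ of $\delta\circ\phi_t$, and Corollary~\ref{c.time} to bound the total variation of $\delta$ by $2\delta_0$, a Gronwall-type argument then yields $|\log(f(t)/f(s))| = O(\delta_0)$ and so $M = 1 + O(\delta_0)$. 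The main obstacle is obtaining the \emph{absolute} $O(1)$ (rather than $O(1/\delta)$) error in the connection coefficients: only with this sharp cancellation of the $-r/\delta$ singularity does the combination $r\delta^{r-1}ab + \delta^r\frac{db}{dt}$ become integrable against a time of order $\delta_0$, and it is where the regularity and derivative-of-curvature hypotheses on the metric are essential. A secondary technicality is controlling $f(t)/f(s)$ near the closest-approach point where $b$ may vanish; this is resolved using the strict convexity of $\delta\circ\phi_t$ provided by Lemma~\ref{l.convex}.
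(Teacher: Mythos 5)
First, a remark on the comparison: the paper does not prove this statement at all --- it is quoted verbatim from \cite[Proposition 3.2]{BMMW} --- so the only meaningful comparison is with the argument in \cite{BMMW}, and your route (treat $f=\delta^r b$ as an approximate Clairaut integral, compute $db/dt$ from the connection, and integrate the logarithmic derivative over the segment using convexity of $\delta\circ\phi_t$) is indeed the same route. Your formula $\frac{db}{dt}=a^2\langle V,\nabla_V JV\rangle+ab\,\langle V,\nabla_{JV}JV\rangle$ is correct, and the overall plan would work.

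However, there is a genuine gap in the error analysis, and it sits exactly at the point you flag as ``the main obstacle.'' The bounds you claim, $\langle V,\nabla_V JV\rangle=O(1)$ and $\langle V,\nabla_{JV}JV\rangle+r/\delta=O(1)$, give only $\frac{df}{dt}=\delta^r a\cdot O(1)$, and this is \emph{not} enough for the Gronwall step: dividing by $f=\delta^r b$ yields $\frac{d}{dt}\log f = O(|a|/b)$, and along an excursion of depth $\delta_{\min}$ one has $b\asymp(\delta_{\min}/\delta)^r$ near the ends of the segment, so $\int |a|/b\,dt \asymp \int_{\delta_{\min}}^{\delta_0}(\delta/\delta_{\min})^r\,d\delta \asymp \delta_0^{r+1}/\delta_{\min}^r$, which blows up as $\delta_{\min}\to 0$; you would not get a constant $M=1+O(\delta_0)$ uniform over all geodesic segments in $\mathbb{D}^\ast(\delta_0)$. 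What is needed is that the error in $db/dt$ be proportional to $b$, i.e.\ $db/dt=-\frac{rab}{\delta}+O(ab)$ (this is precisely \cite[Lemma 3.1(3)]{BMMW}, which the present paper itself invokes in Section \ref{s.winding}); then $\frac{d}{dt}\log(\delta^r b)=O(|a|)=O(|d\delta/dt|)$, and convexity (Lemma \ref{l.convex}) bounds the total variation of $\delta$ by $2\delta_0$, giving $|\log(f(t)/f(s))|=O(\delta_0)$. The fix is cheap but must be said: since $V=\nabla\delta$ has $\Vert V\Vert\equiv 1$ (\cite[Corollary 2.2]{BMMW}), the integral curves of $V$ are geodesics, so $\nabla_V V=0$ and hence $\langle V,\nabla_V JV\rangle=-\langle \nabla_V V, JV\rangle=0$ \emph{exactly} for the Weil--Petersson type metric, not merely $O(1)$; the only error term is then the $O(ab)$ one coming from $\langle V,\nabla_{JV}JV\rangle+r/\delta=O(1)$. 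Finally, your ``secondary technicality'' is stated backwards: at the closest-approach point $a=0$ and $|b|=1$, so $b$ is maximal there; the delicate region for dividing by $b$ is near the entry and exit of the cusp neighbourhood, which is exactly where the uniform-in-$\delta_{\min}$ issue above lives (and if $b$ vanishes at one point the geodesic is radial, $b\equiv 0$, and the statement is trivial).
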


\section{Excursions}\label{s.excursions}

\subsection{Initial positions for excursions}
Let $\delta_0 > 0$ and $0 < d < \delta_0$ be small enough constants such that the collar $C$ given by $\delta \in (\delta_0 - d, \delta_0 + d)$ is contained in the cusp neighbourhood. Given a parameter $R$, we set 
\[
X_R = \left\{ v \in T^1 S \text{ such that } \delta(v) \in C \text{ and } b(v) \leqslant \frac{1}{R} \right\}.
\]
It follows that $\text{vol}(X_R) \asymp 1/R$. 

\begin{proposition}\label{p.depth}
We can fix $\delta_0> 0$ to be small enough such that for any $v \in X_R$, 
\[
\delta(\phi_t(v)) < \frac{2}{R^{1/r}}
\] 
for some time $0 \leqslant t \leqslant a$ where $a=a(r)$ depends only on $r$. 
\end{proposition}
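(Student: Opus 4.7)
The plan is to combine the strict convexity of the depth function along geodesics (Lemma~\ref{l.convex}) with the quasi-Clairaut relation (Proposition~\ref{p1.qC}). Since $v \in X_R$ has $b(v) \leqslant 1/R$, the unit vector is nearly radial: $a(v)^2 = 1 - b(v)^2$ is close to $1$. By strict convexity, $t \mapsto \delta(\phi_t v)$ has a unique global minimum, and I will argue that this minimum is attained at some $t^{\ast} \in [0, a]$ for a constant $a = a(r)$ depending only on the cusp geometry.

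At the minimum time $t^{\ast}$, the derivative $(d/dt)\delta(\phi_t v)\big|_{t = t^{\ast}} = a(\dot\phi_{t^{\ast}}v)$ vanishes, which forces $|b(\dot\phi_{t^{\ast}}v)| = 1$. Applying Proposition~\ref{p1.qC} between times $0$ and $t^{\ast}$ then yields
\[
\delta(\phi_{t^{\ast}} v)^r \;=\; \delta(\phi_{t^{\ast}} v)^r\,|b(\dot\phi_{t^{\ast}} v)| \;\leqslant\; M\, \delta(v)^r\, b(v) \;\leqslant\; M (\delta_0+d)^r \cdot \frac{1}{R}.
\]
Taking $r$-th roots gives $\delta(\phi_{t^{\ast}} v) \leqslant M^{1/r}(\delta_0+d)\, R^{-1/r}$. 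Since $M = 1 + O(\delta_0)$ by Proposition~\ref{p1.qC}, choosing $\delta_0$ (and hence the collar width $d$) small enough guarantees $M^{1/r}(\delta_0 + d) \leqslant 3/2$, yielding the required depth bound.

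For the time bound, the near-radial condition ensures the geodesic crosses the collar and enters $\mathbb{D}^{\ast}(\delta_0)$ after time $O(d)$, and by Corollary~\ref{c.time} it must leave $\mathbb{D}^{\ast}(\delta_0)$ within a further time of at most $2\delta_0$. Since the minimum $t^{\ast}$ necessarily lies between entry and exit, $t^{\ast} \leqslant a$ for a universal constant $a$ depending only on $\delta_0, d$ (and thus only on $r$ in the surface-of-revolution model).

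The main obstacle is directional: strict convexity produces a unique minimum on $\mathbb{R}$, but that minimum need not lie in the future if $v$ is oriented away from the cusp. I would handle this either by restricting $X_R$ to the incoming half $\{a(v) \leqslant 0\}$ --- which preserves the volume estimate $\text{vol}(X_R) \asymp 1/R$ --- or by a bounded time shift taking an outgoing $v$ back to an incoming predecessor, using that the collar has depth $\asymp \delta_0$ and $|a(v)| \approx 1$ so the minimum cannot be far in the past. Once this orientation issue is resolved, the quasi-Clairaut computation and Corollary~\ref{c.time} give both bounds simultaneously.
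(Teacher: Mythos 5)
Your argument is essentially the paper's own proof: apply the quasi-Clairaut relation (Proposition \ref{p1.qC}) at the minimum of $t \mapsto \delta(\phi_t v)$, where $a=0$ forces $|b|=1$, and bound the time of that minimum via Corollary \ref{c.time}. The orientation caveat you raise is a genuine (if minor) imprecision in the definition of $X_R$ --- the paper implicitly takes nearly radial, inward-pointing vectors (compare the set $V$ in Section \ref{s.winding}) --- and your fix of restricting to the incoming half is the natural one, preserving $\text{vol}(X_R) \asymp 1/R$ and all later applications.
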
 

\begin{proof} By the quasi-Clairaut relations, 
\[
\delta(\phi_t(v))^r b(\phi_t(v)) \leqslant M \delta(v)^r b(v).
\]
Note that $\delta(\phi_t(v))$ is minimised when $b(\phi_t(v))$ is maximised and is roughly 1. 
As the cusp is asymptotically a surface of revolution, we may assume that $\delta_0 < 1$ and it is small enough so that
\begin{itemize}
\item the maximum of $b(\phi_t(v))$ is at least 1/2,
\item $(\delta_0 + d)^r < 2 \delta^r_0$, and
\item the constant $M$ in the quasi-Clairaut relation satisfies $M < 2^{r-2}$. 
\end{itemize} 
We then conclude that at the instant $t_0$ when this maximum is achieved
\[
\delta(\phi_{t_0}(v))^r \leqslant 2M \delta(v)^r b(v) \leqslant \frac{4M \delta_0^r}{R} < \frac{2^r} {R}. 
\] 
By Corollary \ref{c.time}, we have $t_0 \leqslant 2\delta_0$ finishing the proof of the proposition.

\end{proof}

\subsection{Smooth approximations of characteristic functions}
Let $p$ be a smooth non-negative bump function that is 1 on $\delta_0 - d/2 \leqslant \delta \leqslant \delta_0 + d/2$ and supported on $\delta_0 - d \leqslant \delta \leqslant \delta_0 + d$ such that $\| p \|_{C^1} \leqslant 10/d$. Similarly, let $q_R$ be a smooth non-negative bump function 
that is equal to 1 on $b(v) \leqslant 1/2R$ and supported on $b(v) \leqslant 1/R$ such that $\| q_R \| _{C^1}\leqslant 3R$. The non-negative function
\[
f_R(v) = p\left( \delta(v) \right) q_R(b(v))
\]
is a smooth approximation of the characteristic function of $X_R$. Note that
\begin{enumerate}
\item $f_R$ is supported on $X_R$ and
\item there exists a constant $h = h(r) > 1$ depending only on $r$ such that 
\[
\frac{1}{h} \leqslant R \int\limits_S f_R \, d\mu \leqslant h 
\]
and 
\[
\| f_R \|_\theta \leqslant h R^\theta.
\]
\end{enumerate}

\subsection{Deep excursions of typical geodesics} 
At this point, we are ready to use the effective ergodic theorem to show that typical geodesics perform deep cusp excursions: 

\begin{theorem}\label{t.lower} For $\mu$-almost every $v\in T^1 S$, any $\kappa >0$ and for all $T$ sufficiently large (depending on $v$ and $r>2$), 
\[
\delta(\phi_t(v)) \leqslant T^{-\frac{1}{2r}+ \kappa}
\]
for some time $0 \leqslant  t \leqslant T$. 
\end{theorem}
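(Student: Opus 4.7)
The plan is to apply Theorem \ref{t.effective} to the bump functions $f_R$ of the previous subsection, with $R = n(T)$ growing polynomially in $T$. The key observation is that if the ergodic integral $\int_0^T f_{n(T)}(\phi_t v)\,dt$ strictly exceeds $a(r)$ for $T$ large, then the orbit must enter $X_{n(T)}$ at some time $s \in [0, T-a(r)]$; Proposition \ref{p.depth} then produces a time $t \in [s, s+a(r)] \subset [0,T]$ with $\delta(\phi_t v) \leqslant \tfrac{3}{2}\, n(T)^{-1/r}$. The whole game is thus to pick $n(T)$ as large as the effective ergodic theorem permits.

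Fix $\kappa > 0$. Since $(1-\alpha)/(1+\theta) \to 1/2$ as $\alpha \downarrow 1/2$ and $\theta \downarrow 0$, we may choose $\alpha \in (1/2, 1)$ and $\theta \in (0,1]$ with
\[
\beta := \frac{1-\alpha}{1+\theta} > \frac{1}{2} - r\kappa.
\]
Let $T_k = k^{2\alpha/(2\alpha-1)}$ as in Theorem \ref{t.effective}; set $n(T) := \lfloor c\, T_k^{\beta} \rfloor$ for $T_k \leqslant T < T_{k+1}$, where $c > 0$ is a small constant to be chosen; and take the sequence of observables to be $f_j$ (the bump function with parameter $R=j$). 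Using $\|f_R\|_{L^1} \geqslant 1/(hR)$ and $\|f_R\|_\theta \leqslant hR^\theta$, Theorem \ref{t.effective} gives, for $\mu$-a.e.\ $v$ and all $T$ large,
\[
\int_0^T f_{n(T)}(\phi_t v)\,dt \;\geqslant\; \frac{T}{m h\, n(T)} \;-\; 2h\, T^\alpha n(T)^\theta.
\]
A direct check shows that both terms are of order $T^{(\alpha+\theta)/(1+\theta)}$, and that the ratio of the first to the second is $\asymp 1/(mh^2 c^{1+\theta})$, which is made arbitrarily large by shrinking $c$. Fixing such a $c$, the lower bound becomes $\gtrsim T^{(\alpha+\theta)/(1+\theta)}$ and so exceeds $a(r)$ for all $T$ sufficiently large. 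Therefore the orbit visits $X_{n(T)}$ at some $s \in [0, T-a(r)]$, and Proposition \ref{p.depth} furnishes $t = s + t_0 \leqslant T$ with
\[
\delta(\phi_t v) \;\leqslant\; \tfrac{3}{2}\, n(T)^{-1/r} \;\leqslant\; C\, T^{-\beta/r}.
\]
Since $\beta/r > 1/(2r) - \kappa$, the right-hand side is dominated by $T^{-1/(2r)+\kappa}$ for $T$ large. A countable intersection over $\kappa = 1/N$, $N \in \mathbb{N}$, yields one $\mu$-conull set on which the theorem holds simultaneously for every $\kappa > 0$.

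The main obstacle is coordinating the parameters $\alpha$, $\theta$, $c$ and the step-function index $n(T)$ so that the lower bound of the ergodic integral is strictly positive (and in fact dominates $a(r)$) uniformly in $T$, rather than only along the subsequence $T_k$. The requirement $n(T) = n(T_k)$ on $[T_k, T_{k+1})$ imposed in Theorem \ref{t.effective}, together with $T_{k+1}/T_k \to 1$, is precisely what makes the matching work, and it is for this reason that the exponent $1/(2r)$ (and not a worse one) appears as the limiting value of $\beta/r$.
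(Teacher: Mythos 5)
Your proposal is correct and takes essentially the same route as the paper: apply the effective ergodic theorem (Theorem \ref{t.effective}) to the bump functions $f_{n(T)}$ with $n(T)$ growing like $T^{(1-\alpha)/(1+\theta)}$, deduce a visit to $X_{n(T)}$ from positivity of the ergodic integral, convert it into depth $\asymp n(T)^{-1/r}$ via Proposition \ref{p.depth}, and let $\alpha\to 1/2$, $\theta\to 0$ to reach the exponent $\tfrac{1}{2r}-\kappa$. Your two deviations --- working at the critical exponent $\beta=\tfrac{1-\alpha}{1+\theta}$ with a small constant $c$ rather than a strictly subcritical $\xi$, and requiring the integral to exceed $a(r)$ so the deep time lands inside $[0,T]$ rather than $[0,T+a]$ --- are minor refinements of the same argument.
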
 

\begin{proof} Fix $\frac{1}{2}<\alpha<1$, $m=2$, $\theta>0$. Let $\xi>0$ be a parameter to be chosen later and consider the function $n:\mathbb{R}\to\mathbb{N}$, $n(T)=T_k^{\xi}$ for $T_k\leqslant T < T_{k+1}$ (where $T_j:=j^{2\alpha/(2\alpha-1)}$).

The effective ergodic theorem (cf. Theorem \ref{t.effective}) applied to the functions $f_R$ introduced in the previous subsection says that, for $\mu$-almost every $v\in X$ and all $T$ sufficiently large (depending on $v$ and $r>2$),
\[
\int\limits_0^T f_{n(T)}(\phi_t v) \, dt \geqslant \frac{1}{2}T\|f_{n(T)}\|_{L^1} - 2 T^{\alpha}\|f_{n(T)}\|_\theta.
\]
On the other hand, by construction, 
\[
\| f_{n(T)} \|_{L^1} \geqslant \frac{1}{h\, T_k^{\xi}} \text{ and } \| f_{n(T)} \|_\theta \leqslant h \,T_k^{\theta\xi}
\]
for a constant $h=h(r)>1$ and for all $T_k \leqslant T < T_{k+1}$. 

It follows that, for $\mu$-almost every $v\in X$ and all $T$ sufficiently large, 
\[
\int\limits_0^T f_{n(T)}(\phi_t v) \, dt \geqslant \frac{1}{2h}T^{1-\xi} - 2 h T^{\alpha+\theta\xi}.
\]

If $1-\xi>\alpha+\theta\xi$, i.e., 
\[
\frac{1-\alpha}{1+\theta}>\xi,
\] the right-hand side of this inequality is strictly positive for all $T$ is sufficiently large. Since the function $f_{n(T)}$ is supported on $X_{T_k^{\xi}}$, we deduce that if $\frac{1-\alpha}{1+\theta}>\xi$
then, for $\mu$-almost every $v\in X$ and all $T$ sufficiently large, $\phi_{t_0}(v)\in X_{T_k^{\xi}}$ (where $T_k \leqslant T<T_{k+1}$) for some $0 \leqslant t_0 \leqslant T$. 

By plugging this information into Proposition \ref{p.depth}, we conclude that, if $\frac{1-\alpha}{1+\theta}>\xi$
then, for $\mu$-almost every $v\in X$ and all $T$ sufficiently large, 
\[ 
\delta(\phi_t(v)) \leqslant \frac{2}{T_k^{\xi/r}} \leqslant \frac{4}{T^{\xi/r}}
\]
for some time $0 \leqslant t_1 \leqslant T+a$ (where $a=a(r)$ is a constant). 

This proves the desired theorem: indeed, we can take the parameter $\xi$ arbitrarily close to $1/2$ in the previous paragraph because $\frac{1-\alpha}{1+\theta}\to 1/2$ as $\alpha \to 1/2$ and $\theta \to 0$. 

\end{proof}

\subsection{Very deep excursions are atypical}

We will now show by an elementary application of the Borel--Cantelli lemma that a typical geodesic \emph{doesn't} perform very deep excursions: 

\begin{theorem}\label{t.upper} For $\mu$-almost every $v \in T^1 S$, any $\kappa > 0$ and for all $T$ sufficiently large (depending on $v$ and $r>2$), 
\[
\delta(\phi_t(v)) >T^{-\frac{1}{r}-\kappa}
\]
for all  times $0 \leqslant t \leqslant T$. 
\end{theorem}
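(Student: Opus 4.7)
The natural plan is a Borel--Cantelli argument over integer time intervals, with the ``forbidden depth'' at time $n$ set to $B_n := n^{-1/r-\kappa}$. Define the hitting events
\[
A_n := \{v\in T^1 S : \phi_t(v)\in \mathbb{D}^*(B_n)\text{ for some }t\in[n,n+1]\}.
\]
By $\phi_t$-invariance of $\mu$, we have $\mu(A_n)=\mu(E_{B_n})$, where $E_B := \{v : \phi_t(v)\in \mathbb{D}^*(B)\text{ for some }t\in[0,1]\}$, so the whole game reduces to bounding $\mu(E_B)$ and then summing.

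The key estimate I would aim for is $\mu(E_B)=O(B^r)$ as $B\to 0^+$. I would decompose $E_B=(E_B\cap\mathbb{D}^*(B))\sqcup(E_B\setminus\mathbb{D}^*(B))$. The first piece has measure $O(B^{r+1})$ directly from the volume asymptotic \eqref{e.volume}. For the second piece, every $v$ therein has a first entry time $\tau(v)\in(0,1]$ at which $\phi_{\tau(v)}(v)$ lies on the ``incoming'' cross-section $\Sigma_B := \{w\in T^1S : \delta(w)=B,\ a(w)<0\}$; hence $E_B\setminus\mathbb{D}^*(B)\subseteq \phi_{[-1,0]}(\Sigma_B)$. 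Disintegrating $\mu$ in a collar flow box, where $d\mu = |a|\,d\ell\,d\theta\,dt$ (with $d\ell$ arc-length on $\{\delta=B\}$ and $\theta$ the angular coordinate), gives
\[
\mu(\phi_{[-1,0]}(\Sigma_B)) \;\leqslant\; \int_{\Sigma_B}|a|\,d\ell\,d\theta \;\asymp\; \ell_S(B),
\]
since $\int_{a<0}|a|\,d\theta$ is a harmless constant. The cusp geometry recalled in Section~\ref{s.BMMW-review} (the same surface-of-revolution model that produces \eqref{e.volume} via $\mu(\mathbb{D}^*(B))\sim\int_0^B\ell_S(b)\,db$) yields $\ell_S(B)\asymp B^r$, completing the claim.

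With $\mu(A_n)=O(n^{-1-r\kappa})$ summable, Borel--Cantelli supplies, for $\mu$-a.e.\ $v$, some $N(v)$ such that $v\notin A_n$ for every $n\geqslant N(v)$. To cover the initial window $[0,N(v)+1]$: for $\mu$-a.e.\ $v$ the forward orbit is defined for all $t\geqslant 0$ and $\delta\circ\phi_t(v)>0$, so by continuity $\delta_\star(v):=\inf_{t\in[0,N(v)+1]}\delta(\phi_t(v))>0$. Then for any $T$ large enough that $T\geqslant N(v)+1$ and $T^{-1/r-\kappa}<\delta_\star(v)$, every $t\in[0,T]$ either sits in $[0,N(v)+1]$ (handled by $\delta_\star(v)$) or in some $[n,n+1]$ with $n\geqslant N(v)$, in which case $\delta(\phi_t(v))>B_n\geqslant B_{\lceil T\rceil}\gtrsim T^{-1/r-\kappa}$.

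The principal obstacle is the flux estimate in the first step: what is needed is $\mu(E_B)=O(B^r)$, genuinely better than the naive bound $O(B^{r+1})$ coming from the volume alone. The improvement comes from the boundary (circle) length $\ell_S(B)\asymp B^r$, so a clean statement and short proof of this circle-length asymptotic in the Weil--Petersson-type setting---by the same argument as in \cite{BMMW} for \eqref{e.volume}---is the technical heart of the proof. The rest (Borel--Cantelli, interpolation between integer times using that $B_n$ is non-increasing, and the finite-time tail) is essentially bookkeeping.
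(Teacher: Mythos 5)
Your argument is correct, but it takes a genuinely different route from the paper's proof. The paper never uses a flux or cross-section estimate: it works along the sparse sequence $T_k=k^{\beta}$, uses the convexity of $t\mapsto\delta(\phi_t v)$ (Lemma \ref{l.convex}) to see that a geodesic touching the level set $\{\delta=T_k^{-\xi}\}$ stays in the shell $[\tfrac12 T_k^{-\xi},2T_k^{-\xi}]$ for a time $\sim T_k^{-\xi}$, covers $[0,T_k]$ by $\sim T_k^{1+\xi}$ intervals of that short length, and bounds each interval's contribution by the shell-volume estimate $O(T_k^{-\xi(r+1)})$ from \eqref{e.volume} together with flow-invariance; summing gives $\mu(A_k)=O(T_k^{1-\xi r})$, and Borel--Cantelli with $\beta\to\infty$ lets $\xi$ approach $1/r$. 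You instead work on unit time intervals with monotone thresholds $B_n=n^{-1/r-\kappa}$ and bound the per-unit-time hitting measure by a Santal\'o-type flux through the incoming section $\Sigma_B=\{\delta=B,\ a\leqslant 0\}$, namely $\mu(E_B)\leqslant O(B^{r+1})+\int_{\Sigma_B}|a|\,d\ell\,d\theta\asymp\ell(S^1(B))\asymp B^r$; the circle-length asymptotic you identify as the technical heart is exactly Lemma \ref{l.level}, which the paper proves (for the winding-number analysis) from the arclength element quoted from \cite{BMMW}. Both routes produce the same per-unit-time bound $O(B^r)$, hence the same exponent, and your interpolation between integer times and the treatment of the initial window are fine. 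What the paper's route buys is that it needs only ingredients already imported from \cite{BMMW} (convexity and the volume asymptotic) and transfers directly to the systole estimate in Section \ref{s.applications}, where an analogue of Lemma \ref{l.convex} is supplied by \cite{BMW}; what your route buys is cleaner bookkeeping, at the cost of actually proving the disintegration $d\mu=|a|\,d\ell\,d\theta\,dt$ in the flow box over $\Sigma_B$ (the standard Liouville-measure Jacobian computation, stated here only as an assertion), together with the small points that first entries from outside cross the level set with $a\leqslant 0$ (your $\Sigma_B$ should include the tangential case, or one should note that $\{a=0\}$ flows out to a null set) and that the change-of-variables inequality holds without injectivity of the flow-box map. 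With those details written out, your proof is complete.
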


\begin{proof} Let $\xi>0$ and $\beta>0$ be parameters to be chosen later, and denote $T_k=k^{\beta}$. 

By the convexity of the distance $\delta$ to the cusp i.e., by Lemma \ref{l.convex}, 
we see that if $\delta(w) =T_k^{-\xi}$, then $\delta(\phi_s(w)) \in [(1/2)T_k^{-\xi}, 2T_k^{-\xi}]$ for all $|s| \sim T_k^{-\xi}$. 

Therefore, if we divide $[0,T_k]$ into $\sim T_k^{1+\xi}$ intervals $I_j^{(k)}=[a_j^{(k)}, b_j^{(k)}]$ of sizes $\sim T_k^{-\xi}$, then 
\begin{align*}
&\left\{v : \exists \, t \in I_j^{(k)} \textrm{ with } \delta(\phi_t(v)) = T_k^{-\xi} \right\} \\
\subset &\left\{v : \delta(\phi_{a_j^{(k)}}(v))\in \left[\frac{1}{2}T_k^{-\xi}, 2T_k^{-\xi}\right]\right\}.
\end{align*} 

By \eqref{e.volume}
\[
\mu \big( \mathbb{D}^\ast (2/R) \setminus  \mathbb{D}^\ast (1/2R) \big) = O\left( \frac{1}{R^{r+1}} \right).
\]

Since the Liouville measure $\mu$ is $\phi_t$-invariant we deduce that  
\[
\mu\left(\{v\in T^1S : \exists \, t\in I_j^{(k)} \textrm{ with } \delta(\phi_t(v)) = T_k^{-\xi} \} \right)=O \left( \frac{1}{T_k^{\xi(r+1)}} \right)
\]
for all $j$. Because we need $\sim T_k^{1+\xi}$ indices $j$ to cover the time interval $[0,T_k]$, we obtain that 
\begin{equation}\label{e.boundary}
\begin{split}
&\mu\left (\{v\in T^1S: \exists \, t\in [0,T_k] \textrm{ with } \delta(\phi_t(v)) = T_k^{-\xi} \} \right)  \\
&= O \left(\frac{1}{T_k^{\xi r-1}} \right).
\end{split} 
\end{equation}

We want to study the set 
\[
A_k=\left\{v\in T^1S: \exists \, t\in [0,T_k] \textrm{ with } \delta(\phi_t(v)) \leqslant T_k^{-\xi} \right\}.
\] 
We divide $A_k$ into $B_k:=A_k\cap\{v\in T^1S: \delta(v)\leqslant 2T_k^{-\xi}\}$ and $C_k:=A_k\setminus B_k$. Because we have $\mu(B_k) \leqslant \mu(\{v\in T^1S: x(v)\leqslant 2T_k^{-\xi}\}) = O(1/T_k^{\xi(r+1)})$, we just need to compute $\mu(C_k)$. To compute this we observe that 
\[
C_k\subset \left\{v: \exists \, t\in [0,T_k] \textrm{ with } \delta(\phi_t(v)) = T_k^{-\xi} \right\}
\]
and, \emph{a fortiori}, $\mu(C_k)=O(1/T_k^{\xi r-1})$ thanks to \eqref{e.boundary}. In particular, 
\begin{align*} 
\mu\left(\{v\in T^1S: \exists \, t\in [0,T_k] \textrm{ with } \delta(\phi_t(v)) \leqslant T_k^{-\xi}\}\right) &= \mu(A_k) \\
&= O \left( \frac{1}{T_k^{\xi r-1}} \right).
\end{align*} 

Note that the series $\sum\limits_{k=1}^{\infty}1/T_k^{\xi r - 1} = \sum\limits_{k=1}^{\infty}1/k^{\beta(\xi r - 1)}$ is summable when $\beta(\xi r-1)>1$, i.e., when $\xi>\frac{1}{r}(1+\frac{1}{\beta})$.
In this context, Borel--Cantelli lemma implies that, for $\mu$-almost every $v\in T^1S$, we have $\delta(\phi_t(v))>T_k^{-\xi}$ for all $t\in [0, T_k]$ and all $T_k=k^{\beta}$ sufficiently large (depending on $v$). Since $\frac{T_{k+1}}{T_k}\to 1$ as $k\to\infty$, we conclude that if 
\[
\xi>\frac{1}{r}(1+\frac{1}{\beta})
\]
then for $\mu$-almost every $v\in T^1S$, the distance to the cusp satisfies $\delta(\phi_t(v))>T^{-\xi}$ for all $t\in [0, T]$ and all $T$ sufficiently large (depending on $v$). 

This ends the proof of the theorem: in fact, by letting $\beta\to\infty$, we can take $\xi>1/r$ arbitrarily close to $1/r$ in the previous paragraph. 

\end{proof} 

\begin{remark} The upper bound for very deep cusp excursions in this theorem does not need the effective ergodic theorem and/or speed of mixing. In particular, we will see in Section \ref{s.applications} an extension of this result to the case of Weil-Petersson geodesic flows in non-exceptional moduli spaces (whose precise rates of mixing are currently unknown). 
\end{remark}

\section{Winding numbers and hyperbolic distances}\label{s.winding}

In this section, we give estimates for the statistics of winding numbers around cusps and hyperbolic distances along random Weil-Petersson type geodesics. 

\subsection{Winding number during an excursion} We will first describe how to calculate the winding number in an excursion. Let $\phi_t(v)$ be a Weil-Petersson type geodesic with an excursion in a cusp neighbourhood. Suppose that during the excursion the distance to the cusp satisfies $\delta(\phi_t(v)) \geqslant 1/D$ with $\delta_{\text{min}} = 1/D$.

By the quasi-Clairaut relations, 
\[
\frac{1}{M} \delta_{\text{min}}^r \leqslant \delta(\phi_t(v))^r b(\phi_t(v)) \leqslant M \delta_{\text{min}}^r
\]
which is equivalent to $1/MD^r \leqslant  \delta(\phi_t(v))^r b(\phi_t(v)) \leqslant M/D^r$. 

For $0 < B \leqslant \delta_0$, recall that $\mathbb{D}^\ast (B)$ is the cusp neighbourhood given by $\delta \leqslant B$. Let $S^1(B) = \partial \mathbb{D}^\ast (B)$ i.e., it is the level set $\delta = B$. In what follows, we write $x \asymp y$ if there exists a constant $C(r)> 1$ that depends only on $r$ such that $C(r)^{-1} x \leqslant y \leqslant C(r) x$. So for instance, the quasi-Clairaut relations will be written as
\[
 \delta(\phi_t(v))^r b(\phi_t(v)) \asymp \delta_{\text{min}}^r.
\]

\begin{lemma}\label{l.level}
For all $B$ with $0 < B \leqslant \delta_0$, the length of $S^1(B)$ satisfies
\[
\ell(S^1(B)) \asymp B^r.
\]
\end{lemma}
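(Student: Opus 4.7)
My plan is to analyse $\ell(S^1(B))$ via a Fermi--coordinate setup adapted to the geodesic foliation of the cusp neighbourhood. Recall from \cite[Corollary 2.2]{BMMW} (invoked earlier in the paper) that $V=\nabla\delta$ is a unit vector field on $\mathbb{D}^\ast(\delta_0)$, so its integral curves are unit-speed geodesics, orthogonal to the family of level sets $S^1(s)$. I would parameterise $\mathbb{D}^\ast(\delta_0)\setminus\{p\}$ by $(s,\theta)\in(0,\delta_0]\times[0,\ell(S^1(\delta_0)))$, where $s=\delta$ and $\theta$ is the arc-length coordinate on $S^1(\delta_0)$ transported inward by the flow of $-V$. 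In these coordinates the metric has the form $ds^2+F(s,\theta)^2\,d\theta^2$ with $F(\delta_0,\theta)\equiv 1$, so that
\[
\ell(S^1(B)) = \int_0^{\ell(S^1(\delta_0))} F(B,\theta)\,d\theta.
\]

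Next, I would read the curvature hypothesis as a perturbed Euler ODE for $F$ in $s$. The standard Fermi-coordinate identity $K=-F_{ss}/F$ combined with $K=-r(r-1)/s^2 + O(1/s)$ yields
\[
F_{ss}(s,\theta) = \left[\frac{r(r-1)}{s^2} + E(s,\theta)\right] F(s,\theta), \qquad |E(s,\theta)|=O(1/s).
\]
The unperturbed equation $F_{ss}=(r(r-1)/s^2)F$ is Euler-type with fundamental solutions $s^r$ and $s^{1-r}$. Treating $E$ as a perturbation of strictly lower order at $s=0$, a Gronwall/variation-of-parameters argument shows that every positive solution admits an asymptotic $F(s,\theta)=A(\theta)s^r(1+o(1))+B(\theta)s^{1-r}(1+o(1))$ as $s\to 0^+$. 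Since $r>2$, the mode $s^{1-r}$ blows up; but geometrically $F(s,\theta)\to 0$ as $s\to 0$ because the cusp is a single point in the completion of $S$ and infinitesimal arc-lengths on $S^1(s)$ must collapse to it, forcing $B(\theta)\equiv 0$. Non-degeneracy of the foliation on $(0,\delta_0]$ (no focal points in negative curvature) gives $A(\theta)>0$ uniformly in $\theta$.

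Integrating this expansion over $\theta$ produces $\ell(S^1(B))=B^r\bigl(\int A(\theta)\,d\theta\bigr)(1+o(1))$ for $B$ small, while on $[B_0,\delta_0]$ for any fixed $B_0>0$, smoothness and positivity of $F$ on a compact region immediately give $F(B,\theta)\asymp B^r$; together these yield $\ell(S^1(B))\asymp B^r$ uniformly on $(0,\delta_0]$. The main obstacle is the perturbed ODE analysis---specifically, ensuring the asymptotic $F(s,\theta)\sim A(\theta)s^r$ holds with constants that are uniform in $\theta$ and depend only on $r$. An attractive alternative route bypasses the ODE entirely: since $\|\nabla\delta\|\equiv 1$, the coarea formula gives $\mathrm{vol}(\mathbb{D}^\ast(B))=\int_0^B \ell(S^1(s))\,ds$, and combining this with the volume estimate \eqref{e.volume} together with a monotonicity input (which in turn follows from convexity of $F$ in $s$, coming from $F_{ss}=-KF>0$, plus $F(0^+,\theta)=0$) extracts the bound $\ell(S^1(B))\asymp B^r$ by a dyadic sandwich on $[B/2,B]$ analogous to the one appearing in the proof of Theorem \ref{t.upper}.
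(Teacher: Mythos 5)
Your route is genuinely different from the paper's: the paper's proof is a one-line citation of the arclength element along $S^1(B)$, namely $d\ell=\left(\frac{B^r}{\delta_0^r}+O(B^{r+1})\right)ds$ from \cite[Page 260]{BMMW}, which it then integrates over the reference circle. Your Fermi-coordinate argument re-derives that input from the curvature hypothesis: the form $ds^2+F^2\,d\theta^2$, the identity $K=-F_{ss}/F$, and the perturbed Euler equation are all correct, and since $|E(s,\theta)|\leqslant C_0/s$ with $C_0$ uniform in $\theta$, one has $\int_0^{\delta_0}s\,|E(s,\theta)|\,ds<\infty$ uniformly, so a Levinson/variation-of-parameters argument indeed gives solutions asymptotic to $s^r$ and $s^{1-r}$ with uniform constants; the normalisation $F(\delta_0,\theta)=1$ then pins down $A(\theta)\asymp\delta_0^{-r}$, so the uniformity issue you flag as the main obstacle is not a real obstruction. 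Your coarea alternative is even more economical inside this paper, since \eqref{e.volume} is quoted there as a black box---though note that in \cite{BMMW} the volume estimate is itself obtained from the same arclength expansion, so that route is not independent of \cite{BMMW}, only of the specific page-260 formula. (The paper's source also contains a commented-out Gauss--Bonnet derivation close in spirit to yours, leading to $r\,\ell(S^1(B))=B\,\frac{d}{dB}\ell(S^1(B))$.)

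The one genuine soft spot is the boundary condition $F(0^+,\theta)=0$, on which both of your routes rely (to kill the $s^{1-r}$ mode, and to upgrade convexity of $s\mapsto F(s,\theta)$ to monotonicity in the coarea route). The justification you give---the completion adds a single point, so arc-lengths on $S^1(s)$ must collapse---only controls the \emph{diameter} of $S^1(s)$ (any two of its points are at distance at most about $2s$ through the cusp), not its length and not $F(s,\theta)$ pointwise: a closed curve confined to a small ball can be arbitrarily long. The fix is cheap and uses ingredients already on your table: by coarea and \eqref{e.volume}, $\int_0^{\delta_0}\int F(s,\theta)\,d\theta\,ds=\mathrm{vol}(\mathbb{D}^\ast(\delta_0))<\infty$, hence $\int_0^{\delta_0}F(s,\theta)\,ds<\infty$ for almost every $\theta$, which is incompatible with a nonzero coefficient of $s^{1-r}$ because $1-r<-1$; thus that coefficient vanishes for a.e.\ $\theta$, which suffices since $\ell(S^1(B))=\int F(B,\theta)\,d\theta$ is unaffected by a null set of $\theta$. (Alternatively: convexity makes $\lim_{s\to 0^+}F(s,\theta)$ exist in $[0,\infty]$, and Fatou along a sequence $s_n\to 0$ with $\ell(S^1(s_n))\to 0$---such a sequence exists because $\int_0^B\ell(S^1(s))\,ds\asymp B^{r+1}$---forces the limit to be $0$ a.e., giving the monotonicity needed for your dyadic sandwich.) With this repaired, both versions of your argument yield $\ell(S^1(B))\asymp B^r$.
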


\begin{proof}
From \cite[Page 260]{BMMW}, the arc-length element along $S^1(B)$ satisfies 
\[
d \ell = \left( \frac{B^r}{\delta_0^r}  + O(B^{r+1}) \right)\, db.
\]
The lemma then follows by integrating the arc-length element.

\end{proof} 

Let $B = \delta(\phi_t(v))$. Recall that the cuspidal angular function $b(\phi_t(v)) = \langle \phi_t(v), JV \rangle$ measures the projection of the velocity vector to the direction $JV$ that is tangent to the level set $\delta= B$. By Lemma \ref{l.level}, the length $\ell(S^1(B))$ of the level set is $\asymp B^r$ and hence the winding number $w$ along the excursion is given by 
\begin{equation}\label{e.windint}
w \asymp \int \frac{b(\phi_t(v))}{\delta(\phi_t(v))^r} \, dt. 
\end{equation}
\begin{remark}
By \cite[Lemma 3.1 (1)]{BMMW}, we have
\[
\frac{d \delta(\phi_t(v))}{dt}  = a = \sqrt{1- b^2}.
\]
For an excursion with $\delta_{\text{min}} = 1/D$, the quasi-Clairaut relations give $b \asymp 1/\delta^r D^r$. Substituting the pair of observations in to \ref{e.windint}, we get 
\[
w \asymp \int \frac{1}{D^r \delta^{2r} \sqrt{1- b^2}} \, d\delta.
\]
For the surface of revolution of $y = x^r$, the Clairaut relations $\delta^r b = \delta_{\text{min}}^r = 1/D^r$ and a further change of variable $\delta \to u$ reduces this exactly to \cite[Equation (8)]{Pol-Wei}. 
\end{remark}

\begin{proposition}\label{p.winding}
Suppose a Weil--Petersson type geodesic $\phi_t(v)$ has an excursion in a cusp neighbourhood with $\delta_{\text{min}} = 1/D$ for some $D > 0$ large enough. Then the winding number $w$ for $\phi_t(v)$ corresponding to the excursions satisfies
\begin{equation}\label{e.wind}
w \asymp D^{r-1}.
\end{equation}
\end{proposition}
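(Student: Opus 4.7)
The plan is to compute $w$ directly from \eqref{e.windint} by reparametrising from $t$ to $\delta$ via $d\delta/dt = \pm\sqrt{1-b^2}$, applying the quasi-Clairaut relation of Proposition \ref{p1.qC} to replace $b(\dot{\phi_t(v)})$ by $\asymp \delta_{\text{min}}^r/\delta^r$, and then rescaling $\delta$ by $\delta_{\text{min}}$. After splitting the excursion into ingoing and outgoing halves, this yields
\[
w \;\asymp\; 2\int_{\delta_{\text{min}}}^{\delta_0} \frac{b\, d\delta}{\delta^r\sqrt{1-b^2}},
\]
from which I would derive matching upper and lower bounds of order $D^{r-1}$.

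For the lower bound, the factor $1/\sqrt{1-b^2}$ can be dropped as $\geqslant 1$, and the quasi-Clairaut inequality $b \geqslant \delta_{\text{min}}^r/(M\delta^r)$ gives
\[
w \;\gtrsim\; \delta_{\text{min}}^r \int_{\delta_{\text{min}}}^{\delta_0} \delta^{-2r}\, d\delta \;\asymp\; \delta_{\text{min}}^{1-r} = D^{r-1},
\]
the integral being dominated by its lower endpoint since $2r-1>1$. For the upper bound I would split the $\delta$-integral at $\delta_\ast := (2M)^{1/r}\delta_{\text{min}}$. On the inner piece $[\delta_{\text{min}},\delta_\ast]$, Corollary \ref{c.time} bounds the total sojourn time by $O(\delta_\ast) = O(\delta_{\text{min}})$ and $b/\delta^r \leqslant 1/\delta_{\text{min}}^r = D^r$, giving an $O(D^{r-1})$ contribution. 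On the outer piece $[\delta_\ast,\delta_0]$, quasi-Clairaut yields $b \leqslant M\delta_{\text{min}}^r/\delta^r \leqslant 1/2$, hence $1-b^2 \geqslant 3/4$, and the substitution $u = D\delta$ turns this piece of the integral into $D^{r-1}$ times a dimensionless integral $\int_{(2M)^{1/r}}^{D\delta_0} O(u^{-2r})\, du$, which is uniformly bounded in $D$ because of the $u^{-2r}$ decay at infinity (with $r>2$). Adding the two contributions gives $w \lesssim D^{r-1}$.

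The main obstacle is the upper bound near the turning point of the excursion: there $\sqrt{1-b^2}$ vanishes while the quasi-Clairaut approximation becomes uninformative (the multiplicative constant $M$ spoils any useful lower estimate on $1-b^2$, so one cannot simply compare globally to the Clairaut integral for the surface of revolution). The decomposition at $\delta_\ast=(2M)^{1/r}\delta_{\text{min}}$ is designed to isolate this bad inner region and control it via the more robust, if cruder, excursion time estimate of Corollary \ref{c.time}, reserving the quasi-Clairaut reduction for the outer region where $1-b^2$ is bounded below by a positive constant.
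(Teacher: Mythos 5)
Your argument is correct, but it follows a genuinely different route from the paper's proof. The paper stays in the variable $b$: it uses the ODE $\tfrac{db}{dt} = -\tfrac{rab}{\delta} + O(ab)$ from \cite[Lemma 3.1 (3)]{BMMW} to rewrite \eqref{e.windint} as an integral $\int_{b(v)}^{1}$, extracts the factor $D^{r-1}$ via quasi-Clairaut in the form $\delta \asymp (bD^r)^{-1/r}$, and then shows the remaining integral is $O(1)$ by a dyadic decomposition of $[b(v),1]$, with each dyadic block contributing $A_j \asymp (c2^j)^{1/r}$; the turning-point singularity at $b=1$ is absorbed into the dyadic sum. You instead change variables to $\delta$ using only $\tfrac{d\delta}{dt}=a$ (which the paper quotes as \cite[Lemma 3.1 (1)]{BMMW}), and you handle the turning point by splitting at $\delta_\ast=(2M)^{1/r}\delta_{\min}$: quasi-Clairaut forces $b\leqslant 1/2$ on the outer piece so that $\sqrt{1-b^2}$ is harmless, while the inner piece is controlled by the crude but robust time bound of Corollary \ref{c.time} together with $b/\delta^r\leqslant D^r$, each piece giving $O(D^{r-1})$; the lower bound comes from discarding $1/\sqrt{1-b^2}\geqslant 1$ and the quasi-Clairaut lower bound $b\geqslant \delta_{\min}^r/(M\delta^r)$, with the integral $\int_{\delta_{\min}}^{\delta_0}\delta^{-2r}\,d\delta \asymp \delta_{\min}^{1-2r}$ dominated by its lower endpoint. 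Your version avoids both the ODE for $b$ and the dyadic analysis, and your treatment of the near-turning-point region is arguably cleaner and more clearly quantified than the paper's; the paper's $b$-variable formulation has the mild advantage of lining up directly with the classical Clairaut computation (cf. the comparison with \cite[Equation (8)]{Pol-Wei}) and with the analogous hyperbolic-distance estimate later in Section \ref{s.winding}, though your decomposition would adapt to that computation as well. Two cosmetic points: the uniform boundedness of your outer dimensionless integral needs only $2r>1$, not $r>2$; and on the inner region one should note (as you implicitly do) that by convexity of $t\mapsto\delta(\phi_t v)$ the sojourn in $\mathbb{D}^\ast(\delta_\ast)$ during a single excursion is one interval, so Corollary \ref{c.time} indeed bounds the total inner time by $2\delta_\ast$.
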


\begin{proof}
Recall that $v$ is the initial vector for the geodesic $\phi_t(v)$. We will formulate Equation \ref{e.windint} entirely in terms of the variable $b$ and then give an estimate for the integral.

By \cite[Lemma 3.1 (3)]{BMMW}
\[
\frac{d b(\phi_t(v))}{dt} = - \frac{rab}{\delta} + O(ab).
\]
Since during the excursion $b(\phi_t(v)) > 0$ and $a(\phi_t(v)) \leqslant 0$, we can substitute $a = - \sqrt{1-b^2}$ to get
\[
\frac{d b(\phi_t(v))}{dt} = \frac{r b\sqrt{1-b^2}}{\delta} - O(b \sqrt{1-b^2}).
\]
Equation \eqref{e.windint} then becomes
\[
w \asymp \int\limits_{b(v)}^1 \left( \frac{b}{\delta^r} \right) \left( \frac{\delta}{rb\sqrt{1-b^2} -\delta O(b\sqrt{1-b^2})} \right) \, db.
\]
By using the quasi-Clairaut relations, we get
\begin{align*} 
w &\asymp  \int\limits_{b(v)}^1 \frac{(bD^r)^{(r-1)/r}}{r \sqrt{1-b^2} - D^{-1} O(b^{-1/r} \sqrt{1-b^2})} \, db \\
&= D^{r-1} \int\limits_{b(v)}^1 \frac{b}{r b^{1/r}\sqrt{1-b^2} - D^{-1} O( \sqrt{1-b^2})} \, db.
\end{align*} 
Rearranging the integral on the right as 
\[
\int\limits_{b(v)}^1 \frac{b}{r b^{1/r}\sqrt{1-b^2} - D^{-1} O(\sqrt{1-b^2})} \, db \\
= \int\limits_{b(v)}^1 
\left( \frac{1}{rb^{1/r} - D^{-1}O(1)} \right) \left( \frac{b}{\sqrt{1-b^2}}\right) \, db
\]
it remains to show that the integral is bounded above and below independent of $D$. 

To simplify notation, we set $c = b(v)$. By the quasi-Clairaut relations, $ c\asymp 1/D^r \delta_0^r$. Let $J$ be the positive integer that satisfies $c2^J \leqslant 1 \leqslant c2^{J+1}$. We cover the interval $[c, 1]$ dyadically by intervals $I_j$ for $j = 0,1, \cdots, J-1$ where for $j \leqslant J-2$ 
\[
I_j = [c 2^j, c2^{j+1}]
\]
and $I_{J-1} = [c2^{J-1}, 1]$. We will split our integral over these sub-intervals $I_j$ and estimate each individually.

Note that $1/4 \leqslant c2^{J-1} \leqslant 1/2$. We can then conclude that on the interval $I_{J-1}$ 
\[
\frac{1}{r/2 - D^{-1} O(1)} \leqslant \frac{1}{rb^{1/r} - D^{-1}O(1)} \leqslant \frac{1}{r/4 - D^{-1} O(1)}. 
\]
For $D$ sufficiently large, the left hand side is bounded below by $2/r$ and the right hand side is bounded above by $8/r$. As a consequence, 
\[
\frac{2}{r} \int\limits_{c2^{J-1}}^{1} \frac{b}{\sqrt{1-b^2}} \, db \leqslant \int\limits_{c2^{J-1}}^{1} \left( \frac{1}{rb^{1/r} - D^{-1}O(1)} \right) \left( \frac{b}{\sqrt{1-b^2}} \right) \, db \\
\leqslant \frac{8}{r} \int\limits_{c2^{J-1}}^{1} \frac{b}{\sqrt{1-b^2}} \, db. 
\]
The anti-derivative of $b / \sqrt{1-b^2}$ is $-\sqrt{1-b^2}$. So by direct integration, the left hand side is $(2/r) \sqrt{1-c2^{J-1}} $ which is bounded below by $(2/r)\sqrt{1- 1/2}$. Similarly, by direct integration, the right hand side is $(8/r) \sqrt{1-c2^{J-1}}$ which is bounded above by $(8/r) \sqrt{1-1/4}$. 
In summary, the integral in question restricted to $I_{J-1}$ is bounded.

We now estimate the integral 
\[
A_j = \int\limits_{c2^j}^{c2^{j+1}} \left( \frac{1}{rb^{1/r} - D^{-1}O(1)} \right) \left( \frac{b}{\sqrt{1-b^2}}\right) \, db 
\]
restricted to the dyadic interval $[c2^j, c2^{j+1}]$ for $j \leqslant J-2$. On the interval $[c2^j, c2^{j+1}]$, we have
\[
\frac{1}{r c^{1/r} 2^{(j+1)/r} - \delta_0 c^{1/r} O(1)} \leqslant \frac{1}{rb^{1/r} - D^{-1}O(1)} \leqslant
\frac{1}{r c^{1/r} 2^{j/r} - \delta_0 c^{1/r} O(1)}
\]
Thus if we let
\[
B_j = \int\limits_{c2^j}^{c2^{j+1}} \frac{b}{\sqrt{1-b^2}} \, db
\]
then
\[
\left( \frac{1}{r c^{1/r} 2^{(j+1)/r} - \delta_0 c^{1/r} O(1)} \right) B_j \leqslant A_j \leqslant \left( \frac{1}{r c^{1/r} 2^{j/r} - \delta_0 c^{1/r} O(1)} \right) B_J.
\]
Equivalently
\[
A_j \asymp \left( \frac{1}{r c^{1/r} 2^{j/r} - \delta_0 c^{1/r} O(1)} \right) B_j.
\]
By direct integration
\[
B_j = -\sqrt{1-b^2} \, \Big|_{c2^j}^{c2^{j+1}} \asymp \sqrt{1 - c^2 2^{2j}} - \sqrt{1 - c^2 2^{2j+2}} \asymp (c2^j)^2.
\]
Thus
\[
A_j \asymp (c 2^j)^{(2r-1)/r}.
\]
In conclusion, we get the estimate
\[
w \asymp D^{r-1} \sum A_j \asymp D^{r-1} (c2^J)^{(2r-1)/r} \asymp D^{r-1}O(1).
\]
This concludes the proof of the proposition.

\end{proof}

\subsection{Total winding along random geodesics}
Combining the estimate in Proposition \ref{p.winding} for the winding number during an excursion  with the effective ergodic theory, we now estimate the total winding along random Weil--Petersson type  geodesic.

\subsubsection{The winding function:} 
As before let $p$ be a smooth non-negative bump function that is 1 on $\delta_0 - d/2 \leqslant \delta \leqslant \delta_0 + d/2$ and supported on $\delta_0 - d \leqslant \delta \leqslant \delta_0+ d$ such that $\Vert p \Vert_{C^1} \leqslant 10/d$. Recall the cuspidal angular function $a(v) = \langle v, V \rangle$. Let $V$ be the set of $v$ be such that $\delta_0 - d \leqslant \delta (v) \leqslant \delta_0+ d$ and $-1 < a(v) < 0$. In other words, $V$ consists of unit tangent vectors which under geodesic flow immediately head for an excursion deeper into the cusp neighbourhood. Let $w$ be the smooth function supported on $V$ defined by
\[
w(v) = p(v)\cdot ( \text{winding number during the excursion defined by the geodesic } \phi_t(v) ).  
\]
For $R >1$, let $\overline{q}_R$ be a smooth non-negative bump function that is equal to 1 on $1/R \leqslant  b(v)$ and supported on $1/2R \leqslant b(v)$ such that $\Vert \overline{q}_R \Vert_{C^1} \leqslant 3R$. Let $w_R$ be the truncation of $w$ defined by 
\[
w_R  = \overline{q}_R \cdot w.
\]

\begin{lemma}\label{l.totalwind}
There exists a constant $\eta > 1$ such that for all $R> 1$ large enough
\[
\frac{1}{\eta} < \int_S w_R \, d\mu  < \eta
\]
and
\[
\Vert w_R \Vert_\theta \leqslant \eta R^{\theta + (r-1)/r }.
\]
\end{lemma}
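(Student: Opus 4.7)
The plan is to first establish a pointwise asymptotic for $w$ on the support of $w_R$ by combining Proposition \ref{p.winding} with the quasi-Clairaut relations, then compute the $L^1$-integral in natural cusp coordinates, and finally extract the H\"older bound by an interpolation argument.

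First, I would pin down the pointwise shape of $w$. On the support of $w$ one has $\delta(v) \asymp \delta_0$, and the quasi-Clairaut relations of Proposition \ref{p1.qC} applied between the initial point and the deepest point of the excursion give $\delta_{\min}^r \asymp \delta(v)^r b(v) \asymp \delta_0^r\, b(v)$. Setting $D = 1/\delta_{\min}$ and feeding this into Proposition \ref{p.winding} yields
\[
w(v) \;\asymp\; \delta_0^{-(r-1)}\, b(v)^{-(r-1)/r}
\]
on the support of $w_R$. Alongside this I would need a companion gradient estimate $|\nabla w(v)| \lesssim b(v)^{-(2r-1)/r}$: since $b(v) \geqslant 1/2R$ forces $\delta_{\min}\gtrsim \delta_0(2R)^{-1/r}$, and the excursion lasts at most $2\delta_0$ by Corollary \ref{c.time}, the flow depends smoothly on the initial vector throughout the excursion, so differentiating under the integral sign in the defining formula for $w$ (re-running the dyadic decomposition of Proposition \ref{p.winding}) should produce the claimed bound.

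For the $L^1$ estimate, I would decompose Liouville measure in local cusp coordinates. Parameterising directions by $v = (\cos\psi)V + (\sin\psi)JV$, so that $a = \cos\psi$ and $b = \sin\psi$, the condition $-1 < a < 0$ becomes $\psi \in (\pi/2,\pi)$ and $d\psi = db/\sqrt{1-b^2}$. Integrating first over the base collar $\delta \in [\delta_0-d, \delta_0+d]$ against the area form (whose mass on the collar is comparable to a constant by \eqref{e.volume}) leaves
\[
\int_S w_R\, d\mu \;\asymp\; \int_{1/2R}^{1} \overline{q}_R(b)\, b^{-(r-1)/r}\, \frac{db}{\sqrt{1-b^2}}.
\]
The singularity at $b = 0$ is integrable because $(r-1)/r < 1$, and the singularity at $b = 1$ is the usual integrable Jacobian. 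Hence the integral converges to a finite positive constant $I(r,\delta_0)$ as $R \to \infty$, which gives $1/\eta < \int_S w_R\, d\mu < \eta$ for all $R$ large enough.

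Finally, for the H\"older norm I would use the standard interpolation $|f|_\theta \leqslant (2\|f\|_{L^\infty})^{1-\theta}\|\nabla f\|_{L^\infty}^{\theta}$. The pointwise estimate gives $\|w_R\|_{L^\infty} \lesssim R^{(r-1)/r}$, and the product rule together with $\|\overline{q}_R\|_{C^1}\leqslant 3R$ and the companion gradient bound yields $\|\nabla w_R\|_{L^\infty}\lesssim R \cdot R^{(r-1)/r} + R^{(2r-1)/r} \asymp R^{(2r-1)/r}$. Interpolating,
\[
|w_R|_\theta \;\lesssim\; R^{(r-1)(1-\theta)/r}\cdot R^{(2r-1)\theta/r} \;=\; R^{(r-1)/r+\theta},
\]
which dominates the $L^\infty$ contribution and gives $\|w_R\|_\theta \leqslant \eta R^{\theta + (r-1)/r}$. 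The main obstacle in this program is the rigorous derivation of the gradient bound for $w$: Proposition \ref{p.winding} only states an $\asymp$ estimate for $w$ itself, so one has to re-run its dyadic decomposition while differentiating the excursion integral in $v$, using the smoothness of the flow uniform away from the cusp (which is guaranteed by $b(v)\geqslant 1/2R$).
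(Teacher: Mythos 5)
Your argument is essentially the paper's: the pointwise asymptotic $w(v)\asymp b(v)^{-(r-1)/r}$ obtained from the quasi-Clairaut relation together with Proposition \ref{p.winding}, integrated over $b\in[1/2R,1]$, gives the two-sided $L^1$ bound, and the H\"older estimate comes from the maximum value $\asymp R^{(r-1)/r}$ (attained at $b\asymp 1/R$) measured against the scale $1/R$ of the cutoff $\overline{q}_R$. The gradient bound you single out as the main obstacle is precisely the point the paper leaves implicit---its proof simply reads off $\Vert w_R\Vert_\theta\lesssim R^{(r-1)/r}/(1/R)^\theta$ from the sup norm and the supports of the bump functions---so your interpolation step is an explicit version of the same reasoning rather than a genuinely different route.
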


\begin{proof}
It follows from Propositions \ref{p.depth} and \ref{p.winding} that 
\[
d \int\limits_{1/R}^{1} \frac{1}{y^{(r-1)/r}} \, dy < \int_S w_R \, d\mu  < 2d \int\limits_{1/2R}^{1} \frac{1}{y^{(r-1)/r}}
\]
which by direct integration implies that 
\[
\int_S w_R \, d\mu   = O(1).
\]
On the other hand, 
\[
\max\limits_{ v \in V} \, w_R(v) \asymp R^{(r-1)/r} 
\]
which is attained when $b(v) \asymp 1/R$. Noting the supports of bump functions $p$ and $\overline{q}$ we conclude that
\[
\Vert w_R \Vert_\theta \asymp \frac{R^{(r-1)/r} }{(1/R)^\theta} = R^{\theta + (r-1)/r }
\]
finishing the proof of the lemma.

\end{proof}

For a geodesic $\phi_t(v)$ and a fixed cusp neighbourhood, we define the \emph{total winding} $W_v(T)$ till time $T$ to be the sum of all winding numbers corresponding to all excursions in the cups neighbourhood till time $T$. More precisely,
\[
W_v(T) = \int\limits_0^T w(\phi_t(v)) \, dt.
\]

The main result in this section is as follows:

\begin{theorem}\label{t.winding}
There is a constant $P> 1$ such that for $\mu$-almost every $v$ and for all $T$ sufficiently large (depending on $v$ and $r$) the total winding till time $T$ satisfies
\[
\frac{1}{P} T < W_v(T) < PT.
\]
\end{theorem}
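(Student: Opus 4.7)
The total winding $W_v(T) = \int_0^T w(\phi_t v)\,dt$ is a Birkhoff time average of the unbounded but non-negative observable $w$, so the natural strategy is to apply the ergodic theorem to $w$ after establishing $w \in L^1(\mu)$ with a strictly positive integral. I would first show $w \in L^1$ using monotone convergence: since the cutoff $\overline{q}_R$ increases pointwise to $1$ on $\{b > 0\}$ as $R\to\infty$, and $w \geqslant 0$ is supported on $V \cap \{b > 0\}$, the truncations $w_R = \overline{q}_R\, w$ increase monotonically to $w$. The uniform estimate $\int w_R\,d\mu \in [1/\eta, \eta]$ from Lemma \ref{l.totalwind} then passes to the limit, yielding
\[
\frac{1}{\eta} \leqslant \int w\,d\mu \leqslant \eta.
\]

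For the upper bound, Birkhoff's ergodic theorem (applicable since $\phi_t$ is ergodic as a consequence of exponential mixing) gives, for $\mu$-a.e.\ $v$, $W_v(T)/T \to \int w\,d\mu \leqslant \eta$, so $W_v(T) \leqslant 2\eta T$ for all $T$ sufficiently large (depending on $v$).

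For the lower bound, I would prefer an effective argument mirroring Theorem \ref{t.lower}: apply Theorem \ref{t.effective} to $f_j = w_j$ with $n(T) = T_k^{\xi}$ for $T_k \leqslant T < T_{k+1}$, where $\alpha \in (1/2, 1)$, $\theta \in (0,1]$, and $\xi > 0$ are chosen so that $\alpha + \xi(\theta + (r-1)/r) < 1$ (any sufficiently small $\xi > 0$ works, since the left-hand side can be made less than $1$ by taking $\alpha$ close to $1/2$ and $\xi$ small). Using Lemma \ref{l.totalwind}, the effective ergodic theorem yields
\[
\int_0^T w_{n(T)}(\phi_t v)\,dt \geqslant \frac{T}{m\eta} - 2\eta\, T^{\alpha+\xi(\theta+(r-1)/r)} \geqslant \frac{T}{2m\eta}
\]
for $\mu$-a.e.\ $v$ and all large $T$. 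Since $w \geqslant w_{n(T)}$ pointwise this gives $W_v(T) \geqslant T/(2m\eta)$, and the theorem follows with $P = \max\{2\eta, 2m\eta\}$. The sole delicate ingredient is the integrability step; everything else is a routine invocation of either Birkhoff or Theorem \ref{t.effective}. A fully effective upper bound could be obtained instead by dyadically decomposing $w - w_{n(T)}$ and combining Theorem \ref{t.effective} with Theorem \ref{t.upper} to bound excursion depths, but this refinement is not needed for the qualitative statement as stated.
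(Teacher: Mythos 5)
Your proposal is correct and follows essentially the same route as the paper: the upper bound via integrability of $w$ (obtained from the uniform bound of Lemma \ref{l.totalwind} on the truncations) together with ergodicity/Birkhoff, and the lower bound by feeding the truncations $w_{n(T)}$ with the norm estimates of Lemma \ref{l.totalwind} into the effective ergodic theorem (Theorem \ref{t.effective}) and checking that the error exponent is below $1$ for $\alpha$ near $1/2$ and $\theta$ small. The only differences are cosmetic: the paper fixes the truncation level as $n(T)=T_j^{\frac{1}{2}-\kappa r}$, motivated by Theorem \ref{t.lower} and Proposition \ref{p.depth}, whereas you take an arbitrary small exponent $\xi$, which works equally well since $w \geqslant w_R$ pointwise and $\int w_R\,d\mu \geqslant 1/\eta$ uniformly in $R$; and your monotone-convergence step for $w\in L^1(\mu)$ can be replaced by Fatou's lemma if the bump functions $\overline{q}_R$ are not literally nested.
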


\begin{proof}
The upper bound for $W_v(T)$ directly follows from Lemma \ref{l.totalwind} and the ergodicity of Weil--Petersson flow. 

By Theorem \ref{t.lower}, for $\mu$-almost every $v$, for any $\kappa >0$ and all sufficiently large $T$ (depending on $v$ and $r$) 
\[
\delta(\phi_t(v)) \leqslant T^{-\frac{1}{2r} + \kappa}
\]
for some time $0 \leqslant t \leqslant T$. In particular, invoking Proposition \ref{p.depth} we can use the truncation $w_{T^{\frac{1}{2} - \kappa r}}$ to establish a lower bound for $W_v(T)$. The rest of the argument uses the lower bound in the effective ergodic theorem viz. Theorem \ref{t.effective}. 

As required for Theorem \ref{t.effective}, we let $1/2 < \alpha <1$ and $T_j = j^{2\alpha/(2\alpha -1)}$. We consider the function $n: \mathbb{R} \to \mathbb{N}$ given by $n(T) = (T_j)^{\frac{1}{2}- \kappa r}$ for each $T_j \leqslant T < T_{j+1}$. By Theorem \ref{t.effective}, given $m >1$ we have that for $\mu$-almost every $v$
\[
\frac{1}{m} T \Vert w_{n(T)} \Vert_{L^1} - 2T^{\alpha} \Vert w_{n(T)} \Vert_\theta \leqslant W_v(T).
\]
To establish the theorem, we need to only analyse the second term on the left. Note that
\[
T^\alpha \Vert w_{n(T)} \Vert_\theta \asymp T^\alpha T_j^{(\frac{1}{2} - \kappa r)(\theta + (r-1)/r)} \asymp T^{\upsilon}
\]
where 
\[
\upsilon = \alpha + \frac{1}{2} \theta + \frac{r-1}{2r} - \kappa r \theta - \kappa(r-1).
\]
Since 
\[
\upsilon < \alpha + \frac{1}{2} \theta + \frac{r-1}{2r}
\]
it suffices to prove that we can choose $\alpha$ and $\theta$ so that the right hand side above is less than 1. First, note that by making $\theta \to 0$ we can make the contribution of the second term as small as we want. In particular, we choose $\theta$ small enough such that 
\[
\frac{1}{2} \theta + \frac{r-1}{2r} < \frac{1}{2}.
\]
In particular, this means that as $\alpha \to (1/2)^+$, we can achieve
\[
\alpha + \frac{1}{2} \theta + \frac{r-1}{2r} < 1.
\]
This concludes the proof of the theorem.

\end{proof} 

\subsection{Hyperbolic distance along random Weil--Petersson geodesics:} The previous discussion about winding numbers can be easily adapted to give the order of magnitude of the hyperbolic distance travelled by a random Weil--Petersson geodesic in exceptional moduli spaces. Recall that for the Weil--Petersson metric on exceptional moduli spaces $r=3$. 

More concretely, let $\phi_t(v)$, $0\leqslant t\leqslant t_{\min}$, be a unit-speed Weil--Petersson geodesic reaching its closest position to the cusp at time $t_{\min}$. Denote by $\delta_{\min}$ the Weil--Petersson distance between $\phi_{t_{\min}}(v)$ and the cusp. 

Consider the upper half-plane model $\{z\in\mathbb{C}: |\textrm{Re}z|\leqslant 1/2, \textrm{Im}z \geqslant h_0\}$, $h_0\geqslant 1$, of cusp neighbourhoods in exceptional moduli spaces. In this model, $\phi_t(v)$ describes a curve $(x(t),y(t))$ whose tangent vector $v(t)$ has almost unit size with respect to the metric $(dx^2+dy^2)/y^3$. Therefore, the Euclidean size of $v(t)$ is $\sim y(t)^{3/2}$ and, \emph{a fortiori}, the hyperbolic length of $v(t)$ is $\sim y(t)^{1/2}$. Since $y(t)\sim \delta(\gamma(t))^{-2}$, we get that the hyperbolic distance $\textrm{dist}_{hyp}$ travelled by $\gamma(t)$ is 
$$\textrm{dist}_{hyp}\asymp \int_0^{t_{\min}}\frac{dt}{\delta(\phi_t(v))}.$$ 
This expression is somewhat similar to the formula \eqref{e.windint} and, in fact, one can use the arguments in the proof of Proposition \ref{p.winding} to show that 
$$\textrm{dist}_{hyp}\asymp \int_0^{t_{\min}}\frac{dt}{\delta(\phi_t(v))}\asymp\log D$$ 
where $\delta_{\min}=1/D$. 

By replacing Proposition \ref{p.winding} by this estimate in the arguments establishing Theorem \ref{t.winding}, we conclude that there is a constant $P>1$ such that, for almost every vector $v$ and for all $T$ sufficiently large, the Weil--Petersson geodesic segment $\{\phi_t(v)\}_{t\in[0,T]}$ (in an  exceptional moduli space) generated by $v$ travels a total hyperbolic distance $\textrm{dist}_{hyp}(T)$ with  
$$\frac{1}{P}T < \textrm{dist}_{hyp}(T) < P T.$$ 

\subsubsection{Comparison with the Sullivan law:} By considering the largest excursion till $T$ in the upper half-plane co-ordinates above, Theorems \ref{t.lower} and \ref{t.upper} imply that for some constant $c_1>1$ the maximum imaginary part $y_{\textrm{max}}^{\textrm{WP}}(T)$ satisfies 
\[
\frac{1}{c_1} T^{(1-\epsilon)/3} < y_{\textrm{max}}^{\textrm{WP}}(T) < c_1 T^{(2+ \epsilon)/3}.
\]
for all times $T$ large enough (depending on the Weil--Petersson geodesic). 

On the other hand, for the hyperbolic metric, by considering the largest excursion till time $T$ in the upper half-plane co-ordinates, the maximum imaginary part $y_{\textrm{max}}^{hyp}(T)$ satisfies
\[
\frac{1}{c_2} T^{1- \epsilon} < y_{\textrm{max}}^{hyp}(T) < c_2 T^{1+ \epsilon}
\]
for some constant $c_2 >1$ and for all times $T$ large enough (depending on the hyperbolic geodesic). These estimates can be proved by using similar techniques for the hyperbolic flow. See for example, \cite[Lemma 5.6]{Gad}.

Since the hyperbolic distance travelled by a random Weil--Petersson geodesic grows linearly in $T$, this indicates that the largest excursions are shallower for the Weil--Petersson metric.

\section{Effective ergodic theory of polynomially mixing flows and applications}\label{s.applications}

In this section, we derive an effective ergodic theorem for slow mixing, specifically polynomially mixing flows. Subsequently, we will comment on the applications of the theorem.

Let $g_t$ be a flow on a space $Y$ preserving a probability measure $\mu$. The flow $g_t$ has a polynomial type decay of correlations if there are constants $K, C> 0$ such that 
\begin{equation}\label{e.correlation-poly}
\left\vert \int\limits_Y f_1 f_2 \circ g_t \, d\mu - \int\limits_Y f_1 \, d\mu \int\limits_Y f_2 \, d\mu \right\vert \leqslant K (1+t)^{-C} \Vert f_1 \Vert_B \Vert f_2 \Vert_B 
\end{equation}
for all $t \geqslant 0$ and all real valued observables $f_1, f_2$ in a Banach space $B \subset L^1(\mu)$ consisting of all functions satisfying some regularity hypothesis. Polynomial mixing can be found in many non-uniformly hyperbolic flows. Examples include semi-intermittency type semi-flows and many billiards maps. See \cite{Mel}, \cite{Che-Zha} for details. Instead of dealing separately with decay of correlations of the form $(\log T)^A/ T^C$ for large $T$ we simplify the discussion by making the constant $C$ slightly smaller allowing us to drop the logarithmic numerator.

Analogously to Lemma \ref{l.1}, we get

\begin{lemma}\label{l.2}
There exists a constant $Q>0$ such that any observable $ f \in B$ with $\int_Y f \, d\mu = 0$ satisfies for all $T$ large enough one of the following bounds:
\begin{align}
\int\limits_Y \left(\int\limits_0^T f(g_t y) \, dt \right)^2\,d\mu(y) &\leqslant QT \|f\|_B^2 \text{   when } C> 1, \\
\int\limits_Y \left(\int\limits_0^T f(g_t y) \, dt \right)^2\,d\mu(y) &\leqslant  QT \log T \|f\|_B^2 \text{   when } C=1, \\
\int\limits_Y \left(\int\limits_0^T f(g_t y) \, dt \right)^2\,d\mu(y) &\leqslant  QT^{2-C} \|f\|_B^2 \text{   when } C< 1.
\end{align}
\end{lemma}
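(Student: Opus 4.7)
The plan is to mirror the proof of Lemma \ref{l.1} verbatim up to the point where the exponential decay was used, and then to replace exponential with polynomial decay and analyse the resulting double integral in each of the three regimes for $C$.

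More precisely, I would first expand the square, interchange the order of integration by Fubini, and use the $g_t$-invariance of $\mu$ to rewrite
\[
\int_Y \left(\int_0^T f(g_t y) \, dt\right)^2 d\mu(y) = \int_0^T\!\int_0^T \int_Y f(g_{|t-s|} x)\, f(x) \, d\mu(x) \, dt \, ds.
\]
Since $\int_Y f \, d\mu = 0$, the polynomial decay hypothesis \eqref{e.correlation-poly} applied to the inner integral yields
\[
\int_Y \left(\int_0^T f(g_t y) \, dt\right)^2 d\mu(y) \leqslant K\|f\|_B^2 \int_0^T\!\int_0^T (1+|t-s|)^{-C}\, dt\, ds.
\]

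The remaining task is to estimate the double integral, which by symmetry and the change of variables $u = t-s$ equals $2\int_0^T \int_0^{T-s}(1+u)^{-C}\, du\, ds$. The three cases reflect the qualitative behaviour of the inner integral $J(R) := \int_0^R (1+u)^{-C}\, du$: when $C > 1$, $J$ is bounded by $1/(C-1)$, which immediately yields the linear bound $O(T)\|f\|_B^2$; when $C = 1$, $J(R) = \log(1+R)$, and then $\int_0^T \log(1+T-s)\, ds = O(T\log T)$; when $C < 1$, $J(R) = ((1+R)^{1-C}-1)/(1-C)$, and a further integration gives $O(T^{2-C})$. In all three cases we can choose a single constant $Q > 0$ depending only on $K$ and $C$ that absorbs the numerical prefactors and is valid for all sufficiently large $T$, producing the three displayed bounds.

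No conceptual obstacle is anticipated; this lemma is a direct polynomial-mixing analogue of Lemma \ref{l.1}, and the only mildly delicate point is bookkeeping the dependence of $Q$ on $K$ and $C$ so that the same constant works uniformly in $f$ and (for $T$ large enough) in $T$. The boundary case $C = 1$ introduces the expected logarithmic loss, which is precisely why the statement treats it separately.
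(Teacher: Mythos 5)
Your proposal is correct and follows essentially the same route as the paper: reduce to the double integral $\int_0^T\int_0^T(1+|t-s|)^{-C}\,dt\,ds$ exactly as in Lemma \ref{l.1} and then evaluate it directly in the three regimes $C>1$, $C=1$, $C<1$, obtaining $O(T)$, $O(T\log T)$ and $O(T^{2-C})$ respectively. The case analysis and the resulting bounds match the paper's computation.
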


\begin{proof}
By the same reasoning as in the proof of Lemma \ref{l.1}, we get the bound
\[
\int\limits_Y \left( \int\limits_0^T f(g_t x) \right)^2 \, d\mu(x) \leqslant K \Vert f \Vert_B^2 \int\limits_0^T \int\limits_0^T (1+ | t- s |)^{-C} \, dt \, ds. 
\]
The lemma follows by direct integration. 
\[
\int\limits_0^T \int\limits_0^T (1+ |t-s|)^{-C} \, dt \, ds = 2 \int_0^T \int\limits_s^T (1+ t -s)^{-C} \, dt \, ds.
\]
When $C \neq 1$, the integral reduces to 
\[
\int\limits_0^T \frac{1}{1-C}(1+t-s)^{1-C} \Big|_s^T  \, ds = \int\limits_0^T \frac{1}{1-C}(1+ T - s)^{1-C}  \, ds - \int\limits_0^T \frac{1}{1-C} \, ds.
\]
When $C > 1$, the first of the integrals is bounded above by $O(T)$.
When $C < 1$, the first of the integrals is $O(T^{2-C})$. The lemma follows in these cases.
When $C =1$, the integral reduces to
\begin{align*}
\int\limits_0^T \log (1+ t -s) \Big|_s^T \, ds &= -(1+T-s) \log(1+T-s)\Big|_0^T + (1+T-s) \Big|_0^T \\
&= (1+T) \log (1+T) - T
\end{align*}
which implies the lemma.

\end{proof} 

\begin{remark} This lemma generalises Lemma \ref{l.1}. 
\end{remark}

Lemma \ref{l.2} permits to extend the effective ergodic theorem viz. Theorem \ref{t.effective} as follows:
\begin{theorem}\label{t.effect-poly}
In addition to the hypothesis for Theorem \ref{t.effective}, if $\alpha$ satisfies
\begin{equation}\label{condition}
\alpha > \min \left( \frac{1}{2}, 1 - \frac{C}{2} \right)
\end{equation} 
then Theorem \ref{t.effective} holds: Given $m>1$, a function $n: \mathbb{R} \to \mathbb{N}$ such that $n(T) = n(T_k)$ for each $T_k \leqslant T < T_{k+1}$, and a sequence $\{f_j\}_{j \in \mathbb{N}} \subset B$ of non-negative functions, we have for $\mu$-almost every $y \in Y$ that
\begin{align*}
\frac{1}{m} T \| f_{n(T)} \|_{L^1} - 2 T^{\alpha}\| f_{n(T)}\|_{B} &\leqslant \int\limits_0^T f_{n(T)}(g_t y) dt \\ &\leqslant mT\| f_{n(T)} \|_{L^1} + 2 T^{\alpha}\| f_{n(T)} \|_B
\end{align*} 
for all $T$ sufficiently large (depending on $y$). 
\end{theorem}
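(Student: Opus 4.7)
The plan is to repeat the proof of Theorem \ref{t.effective} essentially verbatim, substituting the variance bound of Lemma \ref{l.1} by the three-regime bound of Lemma \ref{l.2} and re-checking that Borel--Cantelli summability survives in each regime. Given $f \in B$, set $F = f - \int_Y f\, d\mu$, so $\Vert F \Vert_B \leq 2 \Vert f \Vert_B$. Applying Lemma \ref{l.2} yields
\[
\int_Y \left( \int_0^T F(g_t y)\, dt \right)^2 d\mu(y) \leq Q' \sigma(T,C)\, \Vert f \Vert_B^2,
\]
where $\sigma(T,C) = T$ if $C > 1$, $\sigma(T,C) = T \log T$ if $C = 1$, and $\sigma(T,C) = T^{2-C}$ if $C < 1$. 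Chebyshev's inequality with threshold $R = T^{2\alpha} \Vert f \Vert_B^2$ then produces the analogue of \eqref{e.average} in each regime.

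Next, choose the discretization $T_k$ so that the Chebyshev probabilities at $T = T_k$ and $T = T_{k+1}$ are summable in $k$, while still satisfying $T_{k+1}/T_k \to 1$ (needed for the sandwich step). When $C > 1$ the Chebyshev exponent is $1 - 2\alpha$, identical to the exponential case, so the original $T_k = k^{2\alpha/(2\alpha-1)}$ works unchanged under $\alpha > 1/2$. When $C = 1$ the additional factor $\log T_k$ is dominated by any positive power of $k$, and the same $T_k$ suffices. When $C < 1$ the Chebyshev exponent becomes $2 - C - 2\alpha$; this is negative precisely when $\alpha > 1 - C/2$, and one instead picks $T_k = k^{\gamma}$ with $\gamma > 1/(2\alpha + C - 2)$, which still satisfies $T_{k+1}/T_k \to 1$. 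In each regime, the hypothesis \eqref{condition} on $\alpha$ is arranged exactly so that such a choice of $T_k$ exists.

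With summability secured, apply Borel--Cantelli as in the proof of Theorem \ref{t.effective} to deduce, for $\mu$-almost every $y$ and all sufficiently large $k$,
\[
\left( \int_0^{T_k} F_{n(T_k)}(g_t y)\, dt \right)^2 \leq T_k^{2\alpha} \Vert f_{n(T_k)} \Vert_B^2
\]
and the analogous inequality at $T_{k+1}$. Then invoke the non-negativity of the $f_j$ to sandwich $\int_0^T f_{n(T)}(g_t y)\, dt$ between the two integrals at times $T_k$ and $T_{k+1}$, and use $T_{k+1}/T_k \to 1$ to absorb the slack $T_{k+1} - T_k$ into the prescribed constant $m > 1$, exactly as in the exponential case.

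The main obstacle is the slow-mixing regime $C < 1$, where the $L^2$ bound of Lemma \ref{l.2} grows super-linearly and the naive discretization inherited from the exponential case is insufficient. Resolving this forces both a strengthening of the hypothesis on $\alpha$ and a replacement of $T_k$ by a sequence of higher polynomial growth. Decoupling the Chebyshev exponent from the growth rate of $T_k$ is harmless, since only the mild ratio condition $T_{k+1}/T_k \to 1$ is used in the sandwich step; the arithmetic of verifying both summability and this ratio simultaneously is what the condition \eqref{condition} is designed to guarantee. The borderline case $C = 1$ is milder, as the $\log$ correction is dominated by any positive power of $k$.
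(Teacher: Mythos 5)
Your proposal follows the paper's own strategy: subtract the mean, replace Lemma \ref{l.1} by the three-regime variance bound of Lemma \ref{l.2}, apply Chebyshev with threshold $T^{2\alpha}\Vert f\Vert_B^2$, and rerun the Borel--Cantelli and sandwich steps; your treatment of $C>1$ and $C=1$ coincides with the paper's. The one genuine divergence is the regime $C<1$: the paper keeps the original discretization $T_k=k^{2\alpha/(2\alpha-1)}$ and asserts that the exponent $\tfrac{2\alpha(2-C-2\alpha)}{2\alpha-1}$ is $<-1$ as soon as $2-C-2\alpha<0$, whereas you replace $T_k$ by $k^{\gamma}$ with $\gamma>1/(2\alpha+C-2)$. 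Your version is in fact the more careful one: with the paper's choice of $T_k$, summability of $\sum_k T_k^{2-C-2\alpha}$ requires $2\alpha(2\alpha+C-2)>2\alpha-1$, which is strictly stronger than $\alpha>1-\tfrac{C}{2}$ (for instance it fails for every $\alpha\in(1/2,1)$ when $C\leqslant 1/2$), so the paper's claimed implication is an arithmetic slip which your re-chosen discretization repairs, at no cost since the sandwich step only uses non-negativity and $T_{k+1}/T_k\to 1$. Two caveats. First, the statement fixes $T_k=k^{2\alpha/(2\alpha-1)}$ and assumes $n$ constant on the intervals $[T_k,T_{k+1})$ for that specific sequence; after changing the discretization, your Borel--Cantelli events involve $F_{n(T_k)}$ for the new $T_k=k^{\gamma}$, so what you actually prove is the variant in which $n$ is constant on $[k^{\gamma},(k+1)^{\gamma})$. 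This is exactly how the theorem is used (the applications choose $n$ and the discretization together), but it is not literally the stated hypothesis, and you should say so explicitly. Second, condition \eqref{condition} with $\min$ is presumably a typo for $\max$ (as written it reduces to $\alpha>1/2$, which is already assumed); your argument uses the $\max$ reading, i.e.\ $\alpha>1-\tfrac{C}{2}$ when $C<1$, which is the condition actually needed.
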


\begin{proof}
Given $f \in B$, let $F =  f - \int_Y f \, d\mu$. 

If $C>1$ then the bound in Lemma \ref{l.2} implies that the proof of Theorem \ref{t.effective} works without change. If $C=1$ then the estimate \eqref{e.average} gets modified to
\[
\mu\left(\left\{y \in Y: \left(\int_0^T F(g_t y) \, dt \right)^2 \geqslant T^{2\alpha}\| f \|_{B}^2\right\}\right)\leqslant 4Q T^{1-2\alpha} \log T.
\]
Letting $F_j = f_j - \int_Y f_j \, d\mu$, the estimate above implies
\[
\mu \left( \left \{ y \in Y: \left(\int\limits_0^{T_k} F_{n(T_k)}(g_t y) \, dt \right)^2 \geqslant T_k^{2\alpha}\| f_{n(T_k)} \|_{B}^2\right\} \right) \\
\leqslant 4Q T_k^{1-2\alpha} \log T_k = 4Q \frac{2\alpha \log k}{(2\alpha-1)k^{2\alpha}}
\]
and 
\[
\mu \left( \left\{ y \in Y: \left(\int\limits_0^{T_{k+1}} F_{n(T_k)}(g_t y) \, dt \right)^2 \geqslant T_{k+1}^{2\alpha}\| f_{n(T_k)} \|_{B}^2 \right\} \right) \\
\leqslant 4Q \frac{2\alpha \log (k+1)}{(2\alpha-1)(k+1)^{2\alpha}}.
\]
The series $\sum_{i=1}^\infty \frac{\log i}{i^{2\alpha}}$ is summable for $\alpha > 1/2$. The remainder of the proof is then unchanged from Theorem \ref{t.effective}. 

If $C< 1$, then the estimate \eqref{e.average} gets modified to 
\[
\mu\left(\left\{y \in Y: \left(\int_0^T F(g_t y) \, dt \right)^2 \geqslant T^{2\alpha}\| f \|_{B}^2\right\}\right)\leqslant 4Q T^{2-C-2\alpha}.
\]
Letting $F_j = f_j - \int_Y f_j \, d\mu$, the estimate above implies
\[
\mu \left( \left \{ y \in Y: \left(\int\limits_0^{T_k} F_{n(T_k)}(g_t y) \, dt \right)^2 \geqslant T_k^{2\alpha}\| f_{n(T_k)} \|_{B}^2\right\} \right) \\
\leqslant 4Q T_k^{2-C-2\alpha} = 4Q k^{2\alpha(2-C-2\alpha)/(2\alpha -1)}
\]
and 
\[
\mu \left( \left\{ y \in Y: \left(\int\limits_0^{T_{k+1}} F_{n(T_k)}(g_t y) \, dt \right)^2 \geqslant T_{k+1}^{2\alpha}\| f_{n(T_k)} \|_{B}^2 \right\} \right) \\
\leqslant 4Q (k+1)^{2\alpha(2-C-2\alpha)/(2\alpha -1)}.
\]
The series $\sum_{i=1}^\infty i^{2\alpha(2-C-2\alpha)/(2\alpha -1)}$ is summable when 
\[
\frac{2\alpha(2-C-2\alpha)}{2\alpha -1} < -1
\]
which is satisfied when $2-C-2\alpha < 0$ or equivalently 
\[
\alpha > 1 - \frac{C}{2}.
\]
The rest of the proof now follows Theorem \ref{t.effective}.

\end{proof}

\subsection{Applications} 

Let us now briefly indicate how the general effective ergodic theorem (cf. Theorem \ref{t.effect-poly}) can be combined with our arguments from Sections \ref{s.excursions} and \ref{s.winding} to analyse certain statistics of random Weil--Petersson geodesics in non-exceptional moduli spaces and typical trajectories of planar billiards. 

\subsubsection{Bounds for cusp excursions for the Weil--Petersson metric on non-exceptional moduli spaces:} Consider the Weil--Petersson geodesic flow on the unit cotangent bundle $T^1\mathcal{M}_{g,n}$ of non-exceptional moduli spaces $\mathcal{M}_{g,n}$; $3g-3+n>1$. Recall that $v\in T^1\mathcal{M}_{g,n}$ is $v=(X,q)$ where $X$ is a Riemann surface and $q$ is a quadratic differential with unit Weil--Petersson size. 

The proof of Theorem \ref{t.upper} can be modified to give lower bounds on the hyperbolic systoles $\textrm{sys}(\phi_t(v))$ of the Riemann surfaces associated to a Weil--Petersson geodesic $\phi_t(v)$. 

More precisely, we affirm that, for almost every $v\in T^1\mathcal{M}_{g,n}$, for any $\kappa>0$, and for all $T$ sufficiently large (depending on $v$), the Weil--Petersson geodesic $\phi_t(v)$ generated by $v$ satisfies 
\begin{equation}\label{e.upper-WP-general}
\textrm{sys}(\phi_t(v)) > T^{-\frac{1}{3}-\kappa}
\end{equation}
for all times $0\leqslant t\leqslant T$.  

In fact, let $\xi>0$ and $\beta>0$ be parameters to be chosen later, and denote $T_k=k^{\beta}$. 

By \cite[Lemma 4.14]{BMW}, if $\textrm{sys}(w) =T_k^{-\xi}$, then we have $\delta(\phi_s(w)) \in [(1/2)T_k^{-\xi}, 2T_k^{-\xi}]$ for all $|s| \sim T_k^{-\xi}$. Hence, by dividing $[0,T_k]$ into $\sim T_k^{1+\xi}$ intervals $I_j^{(k)}=[a_j^{(k)}, b_j^{(k)}]$ of sizes $\sim T_k^{-\xi}$, we obtain  
\[
\left\{v: \exists \, t \in I_j^{(k)} \textrm{ with } \textrm{sys}(\phi_t(v)) = T_k^{-\xi} \right\} \\
\subset \left\{v: \textrm{sys}(\phi_{a_j^{(k)}}(v))\in \left[\frac{1}{2}T_k^{-\xi}, 2T_k^{-\xi}\right]\right\}.
\]

Let $\mu$ be the Weil--Petersson volume. By \cite[Lemma 6.1]{BMW},   
\[
\mu \big(\{w\in T^1\mathcal{M}_{g,n}: 1/2R \leqslant \textrm{sys}(w)\leqslant 2/R\} \big) = O\left( \frac{1}{R^{4}} \right).
\]

From the invariance of $\mu$, we deduce that  
\[
\mu\left(\{v\in T^1\mathcal{M}_{g,n}: \exists \, t\in I_j^{(k)} \textrm{ with } \textrm{sys}(\phi_t(v)) = T_k^{-\xi} \} \right)=O \left( \frac{1}{T_k^{4\xi}} \right)
\]
for all $j$. Since $\sim T_k^{1+\xi}$ indices $j$ are needed to cover the time interval $[0,T_k]$, we conclude that 
\begin{equation*}
\mu\left (\{v\in T^1\mathcal{M}_{g,n}: \exists \, t\in [0,T_k] \textrm{ with } \textrm{sys}(\phi_t(v)) = T_k^{-\xi} \}\right)=O \left( \frac{1}{T_k^{3\xi -1}} \right)
\end{equation*}

Define $A_k=\left\{v\in T^1\mathcal{M}_{g,n}: \exists \, t\in [0,T_k] \textrm{ with } \textrm{sys}(\phi_t(v)) \leqslant T_k^{-\xi} \right\}$ and divide it into $B_k:=A_k\cap\{v\in T^1\mathcal{M}_{g,n}: \textrm{sys}(v)\leqslant 2T_k^{-\xi}\}$ and $C_k:=A_k\setminus B_k$. Since $\mu(B_k) = O(1/T_k^{4\xi})$ (thanks to \cite[Lemma 6.1]{BMW}), our task is to estimate $\mu(C_k)$. We observe that 
\[
C_k\subset \left\{v: \exists \, t\in [0,T_k] \textrm{ with } \textrm{sys}(\phi_t(v)) = T_k^{-\xi} \right\}
\]
and, \emph{a fortiori}, $\mu(C_k)=O(1/T_k^{3\xi-1})$. In particular, 
\[
\mu\left(\{v\in T^1\mathcal{M}_{g,n}: \exists \, t\in [0,T_k] \textrm{ with } \textrm{sys}(\phi_t(v)) \leqslant T_k^{-\xi}\}\right) \\
=\mu(A_k) = O \left( \frac{1}{T_k^{3\xi -1}} \right)
\]

The series $\sum\limits_{k=1}^{\infty}1/T_k^{3\xi  - 1} = \sum\limits_{k=1}^{\infty}1/k^{\beta(3\xi  - 1)}$ is summable when $\beta(3\xi -1)>1$, i.e., when $\xi>\frac{1}{3}(1+\frac{1}{\beta})$. Hence,
the Borel--Cantelli lemma implies that, for $\mu$-almost every $v\in T^1\mathcal{M}_{g,n}$, we have $\textrm{sys}(\phi_t(v))>T_k^{-\xi}$ for all $t\in [0, T_k]$ and all $T_k=k^{\beta}$ sufficiently large (depending on $v$). Because $\frac{T_{k+1}}{T_k}\to 1$ as $k\to\infty$, we conclude that if 
\[
\xi>\frac{1}{3}(1+\frac{1}{\beta})
\]
then, for $\mu$-almost every $v\in T^1\mathcal{M}_{g,n}$, one has $\textrm{sys}(\phi_t(v))>T^{-\xi}$ for all $t\in [0, T]$ and all $T$ sufficiently large (depending on $v$). By letting $\beta\to\infty$, we get \eqref{e.upper-WP-general}. 

On the other hand, we are currently \emph{unable} to adapt the proof of Theorem \ref{t.lower} to get upper bounds on the systoles $\textrm{sys}(\phi_t(v))$ of the Riemann surfaces associated to a Weil--Petersson geodesic $\phi_t(v)$. Indeed, the proof of Theorem \ref{t.lower} relies on the effective ergodic theorem which in turn depends on polynomial rates of mixing. At present, it is unknown whether or not Weil--Petersson flows on non-exceptional moduli spaces have polynomial rates of mixing \cite{BMMW2}. Nevertheless, if we \emph{assume} that the Weil--Petersson flow on $T^1\mathcal{M}_{g,n}$ has a polynomial rate of mixing \eqref{e.correlation-poly} for a certain constant $C>0$, \emph{then} one can follow the steps in the proof of Theorem 4.5 to show that, for almost every $v\in T^1\mathcal{M}_{g,n}$, for any $\kappa>0$, and for all $T$ sufficiently large (depending on $v$), the Weil--Petersson geodesic $\phi_t(v)$ generated by $v$ would satisfy 
\begin{equation}\label{e.lower-WP-general}
\textrm{sys}(\phi_t(v)) \leqslant T^{-\frac{\min\{1, C\}}{8}-\kappa}
\end{equation}
for some time $0\leqslant t\leqslant T$. 

\subsubsection{Billiard dynamics} The investigation of the precise rates of mixing for several classes of non-uniformly hyperbolic planar billiard maps and flows is a well-developed subject with a vast literature. In particular, Chernov--Zhang \cite{Che-Zha} established almost linear rate of mixing (i.e., \eqref{e.correlation-poly} with $C<1$ arbitrarily close to one) for the maps associated to many semi-dispersing and Bunimovich-stadia billiards, Melbourne \cite{Mel} obtained similar estimates for the corresponding billiard flows, and Baladi--Demers--Liverani \cite{BDL} recently proved exponential mixing for finite horizon Sinai billiard flows. 

Therefore, we have that a wide class of planar billiards satisfy the effective ergodic theorem in Theorem \ref{t.effect-poly} with $C=1$. 

We expect that, thanks to the nice features of the so-called \emph{homogeneity strips} (see e.g. \cite{Che-Zha}), this effective ergodic theorem can be employed to analyse almost greasing collisions along random billiard trajectories. Moreover, we believe that our investigation of winding numbers can be modifed to study the sizes of \emph{corner series}\footnote{Sequence of consecutive short bounces near the cusp.} of random trajectories of the billiard flow associated to the three-cusps billiard considered by Chernov--Markarian \cite{Che-Mark}.


\end{document}